\newtheorem{theorem}{Theorem}[section]
\newtheorem{lemma}[theorem]{Lemma}
\newtheorem{proposition}[theorem]{Proposition}
\theoremstyle{definition}
\newtheorem{definition}[theorem]{Definition}
\newtheorem{remark}[theorem]{Remark}
\newtheorem{question}[theorem]{Question}
\newtheorem*{notation}{Notation}
\numberwithin{equation}{theorem}
\def\ge{\geqslant}
\def\le{\leqslant}
\def\la{\langle}
\def\ra{\rangle}
\def\phi{\varphi}
\def\tilde{\widetilde}
\def\to{\longrightarrow}
\def\fpt{\operatorname{fpt}}
\def\height{\operatorname{height}\,}
\def\inlex{\operatorname{in_{lex}}}
\def\ord{\operatorname{ord}\,}
\def\Ass{\operatorname{Ass}}
\def\Cl{\operatorname{Cl}}
\def\fraka{\mathfrak{a}}
\def\frakm{\mathfrak{m}}
\def\frakp{\mathfrak{p}}
\def\frakq{\mathfrak{q}}
\def\bsx{{\boldsymbol{x}}}
\def\FF{\mathbb{F}}
\def\KK{\mathbb{K}}
\def\NN{\mathbb{N}}
\def\PP{\mathbb{P}}
\def\QQ{\mathbb{Q}}
\def\ZZ{\mathbb{Z}}
\begin{document}
\title{Hankel determinantal rings have rational singularities}

\author{Aldo Conca}
\address{Dipartimento di Matematica, Universit\`a di Genova, Via Dodecaneso 35, I-16146 Genova, Italy}
\email{conca@dima.unige.it}

\author{Maral Mostafazadehfard}
\address{IMPA, Estrada Dona Castorina 110, 22460-320 Rio de Janeiro, Brazil}
\email{maralmostafazadehfard@gmail.com}

\author{Anurag K. Singh}
\address{Department of Mathematics, University of Utah, 155 South 1400 East, Salt Lake City, UT~84112, USA}
\email{singh@math.utah.edu}

\author{Matteo Varbaro}
\address{Dipartimento di Matematica, Universit\`a di Genova, Via Dodecaneso 35, I-16146 Genova, Italy}
\email{varbaro@dima.unige.it}

\thanks{A.C.~and M.V.~were supported by GNSAGA-INdAM; M.M.~was supported by post doctoral fellowships CNPq-PDE 204593/2014-0 and CAPES-PNPD; A.K.S.~was supported by NSF grant DMS~1500613, and his research stay at Universit\`a di Genova was partially supported by the Simons Foundation and by the Mathematisches Forschungsinstitut Oberwolfach; M.V.~was also supported by the \lq\lq program for abroad research activity of the University of Genoa\rq\rq, Pr. N, 100021-2016-MV-ALTROUNIGE$\_$001.\\
\phantom{..}\quad We are grateful to the referee for a careful reading of the manuscript, and for helpful suggestions.}

\begin{abstract}
Hankel determinantal rings, i.e., determinantal rings defined by minors of Hankel matrices of indeterminates, arise as homogeneous coordinate rings of higher order secant varieties of rational normal curves; they may also be viewed as linear specializations of generic determinantal rings. We prove that, over fields of characteristic zero, Hankel determinantal rings have rational singularities; in the case of positive prime characteristic, we prove that they are $F$-pure. Independent of the characteristic, we give a complete description of the divisor class groups of these rings, and show that each divisor class group element is the class of a maximal Cohen-Macaulay module.
\end{abstract}
\maketitle

%%%%%%%%%%%%%%%%%%%%%%%%%%%%%%%%%%%%%%%%%%%%%%%%%%%%%%%%%%%%%%%%%%%%%%%%%%%%%%%%%%
\section*{Introduction}
%%%%%%%%%%%%%%%%%%%%%%%%%%%%%%%%%%%%%%%%%%%%%%%%%%%%%%%%%%%%%%%%%%%%%%%%%%%%%%%%%%

Throughout this paper, by a \emph{Hankel matrix}, we mean a matrix of the form
\[
H\colonequals\begin{pmatrix}
x_1 & x_2 & x_3 & \cdots & x_s\\
x_2 & x_3 & \cdots & \cdots & x_{s+1}\\
x_3 & \cdots & \cdots & \cdots & x_{s+2}\\
\vdots & \vdots & \vdots & \vdots & \vdots\\
x_r & \cdots & \cdots & \cdots & x_{s+r-1}
\end{pmatrix},
\]
where $x_1,\dots,x_{s+r-1}$ are indeterminates over a field $\FF$. By a \emph{Hankel determinantal ring} we mean a ring of the form
\[
\FF[x_1,\dots,x_{s+r-1}]/I_t(H),
\]
where~$1\le t\le\min\{r,s\}$, and $I_t(H)$ is the ideal generated by the size $t$ minors of~$H$. These rings arise as homogeneous coordinate rings of higher order secant varieties of rational normal curves, see for example Room's 1938 study \cite[Chapter~11.7]{Room}.

We prove that Hankel determinantal rings over fields of characteristic zero have rational singularities, Theorem~\ref{theorem:ratsing}. In particular, higher order secant varieties of rational normal curves have rational singularities. Theorem~\ref{theorem:ratsing} may be compared with corresponding statements for generic determinantal rings, and those defined by minors of symmetric matrices of indeterminates or by pfaffians of skew-symmetric matrices of indeterminates: in characteristic zero, these are all invariant rings of linearly reductive classical groups acting on polynomial rings, and hence are pure subrings of polynomial rings. By Boutot's theorem~\cite{Boutot}, it then follows that they have rational singularities. We do not know if Hankel determinantal rings, in general, arise as invariant rings for group actions on polynomial rings, or if they are pure subrings of polynomial rings. However, for $t\ge 3$, we show that they are not pure subrings of the polynomial rings in which they are naturally embedded, see Proposition~\ref{proposition:not:pure:subrings}. Our proof of rational singularities is via reduction modulo~$p$ methods, using Smith's theorem \cite{Smith:ratsing} that rings of $F$-rational type have rational singularities. 

We compute the divisor class groups of Hankel determinantal rings: the group is finite cyclic, in particular, the rings are $\QQ$-Gorenstein, and we show that each divisor class group element corresponds to a rank one maximal Cohen-Macaulay module, see Theorem~\ref{theorem:class:group}.

We also prove that Hankel determinantal rings over fields of positive characteristic are~$F$-pure, Theorem~\ref{theorem:fpure}. Finally, for $R$ a Hankel determinantal ring with homogeneous maximal ideal $\frakm_R$, we compute the $F$-pure threshold of~$\frakm_R$ in $R$, and of its defining ideal~$I_t(H)$ in the ambient polynomial ring, see Theorems~\ref{theorem:fpt1} and~\ref{theorem:fpt2}. 

%%%%%%%%%%%%%%%%%%%%%%%%%%%%%%%%%%%%%%%%%%%%%%%%%%%%%%%%%%%%%%%%%%%%%%%%%%%%%%%%%%
\section{Generalities}
%%%%%%%%%%%%%%%%%%%%%%%%%%%%%%%%%%%%%%%%%%%%%%%%%%%%%%%%%%%%%%%%%%%%%%%%%%%%%%%%%%

By a result of Gruson and Peskine,~\cite[Lemme~2.3]{Gruson-Peskine}, every Hankel determinantal ring is isomorphic to one where the defining ideal $I_t(H)$ is generated by the maximal sized minors of a Hankel matrix; alternatively see \cite[Proposition~7]{JWatanabe} or \cite[Corollary~2.2(b)]{Conca}. In view of this, we will henceforth work with Hankel determinantal rings of the form
\[
R\colonequals\FF[x_1,\dots,x_{n+t-1}]/I_t(H),
\]
where $H$ is a $t\times n$ Hankel matrix and $t\le n$; except where stated otherwise, $H$ will denote such a~matrix.

Consider the \emph{generic} determinantal ring
\[
B\colonequals\FF[Y]/I_t(Y),
\]
where $Y$ is a~$t\times n$ matrix of indeterminates, and $I_t(Y)$ the ideal generated by its size $t$ minors. The $(t-1)(n-1)$ elements $Y_{i,j+1}-Y_{i+1,j}$ are readily seen to be part of a system of parameters for $B$, and specializing these to $0$ gives a ring isomorphic to~$R$. Since~$B$ is Cohen-Macaulay by \cite{Eagon-Northcott, Hochster-Eagon}, so is the ring $R$, and the Eagon-Northcott complex provides a minimal free resolution of $R$. It follows as well that
\[
\dim R\ =\ 2t-2,
\]
and hence that
\[
\height I_t(H)\ =\ n-t+1.
\]
The elements $x_1,\dots,x_{t-1},x_{n+1},\dots,x_{n+t-1}$ are a homogeneous system of parameters for~$R$, and the socle modulo this system of parameters is spanned by the degree $t-1$ monomials in~$x_t,\dots,x_n$. In particular, the ring $R$ has $a$-invariant
\[
a(R)\ =\ 1-t.
\]
The multiplicity of the ring $R$ is
\[
e(R)\ =\ \binom{n}{t-1},
\]
as may be seen directly from the above discussion, or obtained using the multiplicity of generic determinantal rings, e.g., \cite[Proposition~2.15]{Bruns-Vetter}.

The ring $R$ is a normal domain, see for example,~\cite[Proposition~8]{JWatanabe}; it is Gorenstein precisely when~$t=n$. The ideal $I_t(H)$ is a set-theoretic complete intersection by~\cite{Valla}. The singular locus of $R$ is defined by the image of $I_{t-1}(H)$, see \cite[Theorem~1.56]{Iarrobino-Kanev}. For secant varieties of smooth curves in general, we mention \cite{Vermeire} and the references therein. 

\begin{notation}
Given a matrix $X$, we use ${[a_1\ \dots\ a_r\mid b_1\ \dots\ b_r]}_X$ to denote the determinant of the submatrix of $X$ with rows $a_1,\dots,a_r$ and columns $b_1,\dots,b_r$. We omit the subscript whenever the matrix is clear from the context. 
\end{notation}

%%%%%%%%%%%%%%%%%%%%%%%%%%%%%%%%%%%%%%%%%%%%%%%%%%%%%%%%%%%%%%%%%%%%%%%%%%%%%%%%%%
\section{Rational singularities}
%%%%%%%%%%%%%%%%%%%%%%%%%%%%%%%%%%%%%%%%%%%%%%%%%%%%%%%%%%%%%%%%%%%%%%%%%%%%%%%%%%

In proving that Hankel determinantal rings of characteristic zero have rational singularities, we will use the following description: A $2\times n$ Hankel determinantal ring over a field~$\FF$ is readily seen to be isomorphic to the $n$-th Veronese subring of a polynomial ring~$\FF[u,v]$, where the Hankel matrix maps entrywise to
\[
\begin{pmatrix}
u^n & u^{n-1}v & u^{n-2}v^2 & \cdots & uv^{n-1}\\
u^{n-1}v & u^{n-2}v^2 & \cdots & \cdots & v^n
\end{pmatrix}.
\]
This is the homogeneous coordinate ring of the rational normal curve $C_n$ in $\PP^n$; as it is a Veronese subring, it is a pure subring of $\FF[u,v]$, independent of the characteristic of $\FF$. 

A $3\times n$ Hankel determinantal ring is the homogeneous coordinate ring of the secant variety of the rational normal curve $C_{n+1}$ in $\PP^{n+1}$; it is isomorphic to the subring of the polynomial ring $\FF[u_1,u_2,v_1,v_2]$, where the Hankel matrix maps entrywise to the matrix
\[
\begin{pmatrix}
u_1^{n+1}+u_2^{n+1} & u_1^nv_1^{\phantom.}+u_2^nv_2^{\phantom.} & u_1^{n-1}v_1^2+u_2^{n-1}v_2^2 & \cdots & u_1^2v_1^{n-1}+u_2^2v_2^{n-1}
\\[0.4em]
u_1^nv_1^{\phantom.}+u_2^nv_2^{\phantom.} & u_1^{n-1}v_1^2+u_2^{n-1}v_2^2 & \cdots & \cdots & u_1^{\phantom.}v_1^n+u_2^{\phantom.}v_2^n
\\[0.4em]
u_1^{n-1}v_1^2+u_2^{n-1}v_2^2 & \cdots & \cdots & \cdots & v_1^{n+1}+v_2^{n+1}
\end{pmatrix}.
\]

More generally, the Hankel determinantal ring $R\colonequals\FF[x_1,\dots,x_{n+t-1}]/I_t(H)$ is the homogeneous coordinate ring of the order $t-2$ secant variety of the rational normal curve~$C_{n+t-2}$ in $\PP^{n+t-2}$, see for example~\cite[Section~4]{Eisenbud}. Specifically, we claim that $R$ is isomorphic to the $\FF$-subalgebra of the polynomial ring
\[
S\colonequals\FF[u_1,\dots,u_{t-1},v_1,\dots,v_{t-1}]
\]
generated by the elements
\begin{equation}
\label{equation:secant}
h_i\ \colonequals\ u_1^{n+t-2-i}v_1^i\ +\ u_2^{n+t-2-i}v_2^i\ +\ \cdots\ +\ u_{t-1}^{n+t-2-i}v_{t-1}^i,
\quad\text{ for }\ 0\le i\le n+t-2.
\end{equation}

To see this, consider the $\FF$-algebra homomorphism $\phi\colon\FF[x_1,\dots,x_{n+t-1}]\to S$ defined by $\phi(x_i)=h_{i-1}$ for each $i$. Note that $\phi$ maps the Hankel matrix of indeterminates $H$ to a matrix $M$ that is Hankel in the elements~$h_i$. As $M$ is the sum of $t-1$ matrices of the form
\[
\begin{pmatrix}
u^{n+t-2} & u^{n+t-3}v & u^{n+t-4}v^2 & \cdots & u^{t-1}v^{n-1}\\
u^{n+t-3}v & u^{n+t-4}v^2 & \cdots & \cdots & u^{t-2}v^n\\
u^{n+t-4}v^2 & \cdots & \cdots & \cdots & u^{t-3}v^{n+1}\\
\vdots & \vdots & \vdots & \vdots & \vdots\\
u^{n-1}v^{t-1} & \cdots & \cdots & \cdots & v^{n-t+2}
\end{pmatrix},
\]
each having rank~$1$, it follows that the rank of $M$ is at most $t-1$, i.e., that $I_t(M)=0$. Hence~$\phi$ induces a homomorphism $\tilde{\phi}\colon R\to S$. Since $R$ is a domain of dimension $2t-2$, which is also the dimension of $S$, it follows that $\tilde{\phi}$ is injective.

\begin{theorem}
\label{theorem:ratsing}
Let $R=\FF[x_1,\dots,x_{n+t-1}]/I_t(H)$, where $\FF$ is a field, and $H$ is a $t\times n$ Hankel matrix. If $\FF$ has characteristic zero, then $R$ has rational singularities. If $\FF$ is a field of positive characteristic $p$, with $p\ge t$, then $R$ is $F$-rational.
\end{theorem}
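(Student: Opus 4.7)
My plan is to prove the positive characteristic statement, and then deduce the characteristic zero case via reduction modulo $p$. For this reduction, since $I_t(H)$ is defined over $\ZZ$, one spreads $R$ out over a finitely generated $\ZZ$-subalgebra of $\FF$ whose closed fibers at almost all primes $p$ are Hankel determinantal rings over $\FF_p$. If those fibers are $F$-rational for $p\ge t$ (which excludes only finitely many primes), Smith's theorem that rings of $F$-rational type have rational singularities yields the characteristic zero claim.

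The real work is $F$-rationality in characteristic $p\ge t$. Since $R$ is already Cohen-Macaulay, it suffices to verify tight closure of a single parameter ideal. A natural candidate is the homogeneous system of parameters $\theta = x_1,\dots,x_{t-1},x_{n+1},\dots,x_{n+t-1}$ identified earlier, whose socle modulo $(\theta)$ is spanned by the degree $t-1$ monomials in $x_t,\dots,x_n$; $F$-rationality reduces to showing that no nonzero socle representative lies in the tight closure $(\theta)^*$. The natural workspace is the embedding $\tilde\phi\colon R \hookrightarrow S = \FF[u_1,\dots,u_{t-1},v_1,\dots,v_{t-1}]$ into a regular (and hence $F$-rational) ring of the same Krull dimension $2(t-1)$.

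The hypothesis $p\ge t$ makes $(t-1)!$ a unit in $\FF$, which provides a Reynolds operator for the symmetric group $\Sigma_{t-1}$ permuting the pairs $(u_k,v_k)$; the generators $h_i$ of $R$ are $\Sigma_{t-1}$-invariant. The goal is to exploit this symmetry, together with the $F$-rationality of $S$, to transport any putative tight-closure relation in $R$ into the ambient $S$ and obtain a contradiction, since $S$ has trivial tight closure on parameter ideals.

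The main obstacle is that $R$ is neither a pure subring of $S$ (as noted for $t\ge 3$), nor does $R$ equal the full invariant ring $S^{\Sigma_{t-1}}$: the $h_i$ are specific power-sum-like polynomials, not generic symmetric functions, and moreover the geometric fibers of $\operatorname{Spec} S \to \operatorname{Spec} R$ are positive-dimensional, so the inclusion $R\hookrightarrow S$ is not even module-finite. Consequently neither Boutot-type purity arguments nor the classical Hochster-Roberts invariant theory applies directly. The technical heart will be an explicit Frobenius-power computation: expanding a putative tight-closure witness as a polynomial in the $h_i$, controlling the multinomial denominators via $p\ge t$, and leveraging the explicit monomial structure of the $h_i$ together with the Eagon-Northcott resolution of $R$ to force the witness to encode an element of $(\theta)R$. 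I expect this combinatorial descent to be the most subtle part of the argument.
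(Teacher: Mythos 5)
Your setup matches the paper's: reduce to the positive characteristic statement via Smith's theorem, use Cohen--Macaulayness to reduce $F$-rationality to the tight closure of a single parameter ideal, take the parameter ideal $\fraka$ generated by $h_0,\dots,h_{t-2},h_n,\dots,h_{n+t-2}$ (the images of $x_1,\dots,x_{t-1},x_{n+1},\dots,x_{n+t-1}$), and work inside the secant-variety embedding $R\subset S=\FF[u_1,\dots,u_{t-1},v_1,\dots,v_{t-1}]$. But the decisive step is left unexecuted, and the route you sketch for it is misdirected. You treat the failure of purity of $R\subset S$ as the main obstacle and propose to compensate with a Reynolds operator for $\Sigma_{t-1}$ and an explicit Frobenius-power computation tied to the Eagon--Northcott resolution. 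None of that is needed, and purity is a red herring: to show a socle element $r$ is \emph{not} in $\fraka^*$ one only needs the easy, forward persistence of tight closure along the inclusion of domains $R\subset S$ (a test element $c\neq 0$ in $R$ with $cr^q\in\fraka^{[q]}R$ remains nonzero in $S$), which gives $r\in(\fraka S)^*=\fraka S$ since $S$ is regular. Purity would be required only to pull $\fraka S\cap R$ back down to $\fraka$, which is not what the argument needs.

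The second missing ingredient is the observation that makes the membership $r\in\fraka S$ checkable: each $h_i$ with $0\le i\le t-2$ lies in $(u_1^n,\dots,u_{t-1}^n)S$ and each $h_i$ with $n\le i\le n+t-2$ lies in $(v_1^n,\dots,v_{t-1}^n)S$, so $\fraka S$ is contained in the monomial ideal $(u_1^n,\dots,u_{t-1}^n,v_1^n,\dots,v_{t-1}^n)S$. A socle representative $\sum\lambda_{i_1\cdots i_{t-1}}h_{i_1}\cdots h_{i_{t-1}}$ with $t-1\le i_1\le\cdots\le i_{t-1}\le n-1$ contains the monomial $\prod_l u_l^{n+t-2-k_l}v_l^{k_l}$, all of whose exponents are strictly less than $n$, with coefficient $c\,\lambda_{k_1\cdots k_{t-1}}$ where $c$ is a product of positive integers less than $t$ (the multiplicities of repeated indices); this, not a Reynolds operator, is where $p\ge t$ enters. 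Hence all $\lambda$ vanish and $\fraka$ is tightly closed. Without the persistence-plus-regularity step and the containment of $\fraka S$ in an explicit monomial ideal, your ``combinatorial descent'' has no concrete identity to work with, so as written the proof has a genuine gap at its core.
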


It follows that if $\FF$ has characteristic $p\ge t$, then the ring $R$ has rational singularities in the sense of \cite[Definition~1.3]{Kovacs}; see \cite[Corollary~1.12]{Kovacs}.

\begin{proof}
It suffices to prove the positive characteristic assertion in the theorem: it then follows that $R$ is of $F$-rational type for $\FF$ of characteristic zero, and then by \cite[Theorem~4.3]{Smith:ratsing} that $R$ has rational singularities.

Let $\FF$ be a field of characteristic $p\ge t$, and assume $t\ge 2$. Using \cite[Theorem~4.7]{HH:JAG} and the preceding remark in that paper, it suffices to prove that the ideal generated by one choice of a homogeneous system of parameters for~$R$ is tightly closed. Set
\[
S\colonequals\FF[u_1,\dots,u_{t-1},v_1,\dots,v_{t-1}],
\]
i.e., $S$ is a polynomial ring in $2t-2$ indeterminates, and identify $R$ with the subring generated by the elements $h_0,\dots,h_{n+t-2}$ as in~\eqref{equation:secant}. The elements
\[
h_0,\dots,h_{t-2},h_n,\dots,h_{n+t-2}
\]
form a homogeneous system of parameters for~$R$. Let $\fraka$ be the ideal of $R$ generated by these elements; it suffices to show that~$\fraka$ is tightly closed. Note that $h_i$ belongs to the ideal~$(u_1^n,u_2^n,\dots,u_{t-1}^n)S$ for $0\le i\le t-2$, and to~$(v_1^n,v_2^n,\dots,v_{t-1}^n)S$ for $n\le i\le n+t-2$, so
\[
\fraka S\ \subseteq\ (u_1^n,u_2^n,\dots,u_{t-1}^n,v_1^n,v_2^n,\dots,v_{t-1}^n)S. 
\]

The socle of $R/\fraka$ is the vector space spanned by the images of the elements
\[
h_{i_1}h_{i_2}\cdots h_{i_{t-1}}
\quad\text{ where }\ t-1\le i_1\le i_2\le\cdots\le i_{t-1}\le n-1.
\]
Suppose that a linear combination of the above elements, say
\[
r\colonequals\sum\lambda_{i_1 i_2 \cdots i_{t-1}} h_{i_1}h_{i_2}\cdots h_{i_{t-1}}
\quad\text{ where }\ \lambda_{i_1 i_2 \cdots i_{t-1}}\in\FF,
\] 
belongs to $\fraka^*$, i.e., to the tight closure of $\fraka$ in $R$. Since $R\subset S$ is an inclusion of domains, it then follows from the definition of tight closure that $r\in (\fraka S)^*$. But $(\fraka S)^*=\fraka S$ since $S$ is regular, implying that $r\in\fraka S$, and hence that
\[
r\ \in\ (u_1^n,u_2^n,\dots,u_{t-1}^n,v_1^n,v_2^n,\dots,v_{t-1}^n)S. 
\]
We claim that this occurs only when each coefficient $\lambda_{i_1 i_2 \cdots i_{t-1}}$ equals $0$; it then follows that~$r=0$, i.e., that $\fraka$ is tightly closed, as desired.

We first illustrate the proof of the claim when $t=3$. In this case, the ring $R$ may be identified with the $\FF$-subalgebra of $S=\FF[u_1,u_2,v_1,v_2]$ generated by the elements
\[
h_i\ =\ u_1^{n+1-i}v_1^i + u_2^{n+1-i}v_2^i
\quad\text{ where }\ 0\le i\le n+1.
\]
Suppose
\[
r\ =\sum_{2\le i_1\le i_2\le n-1} \lambda_{i_1i_2} h_{i_1}h_{i_2}\ \in\ (u_1^n,u_2^n,v_1^n,v_2^n)S.
\]
Fix $k_1,k_2$ with $2\le k_1\le k_2\le n-1$, and consider the coefficient of $u_1^{n+1-k_1}v_1^{k_1} u_2^{n+1-k_2}v_2^{k_2}$ in the expression above, i.e., in
\[
\sum\lambda_{i_1i_2} h_{i_1}h_{i_2}\ =\ \sum\lambda_{i_1i_2} (u_1^{n+1-i_1}v_1^{i_1} + u_2^{n+1-i_1}v_2^{i_1}) (u_1^{n+1-i_2}v_1^{i_2} + u_2^{n+1-i_2}v_2^{i_2}).
\]
This coefficient is $\lambda_{k_1k_2}$ if $k_1<k_2$, and it equals $2\lambda_{k_1k_1}$ if $k_1=k_2$. Since the characteristic of~$\FF$ is~$p\ge 3$, and $r\in (u_1^n,u_2^n,v_1^n,v_2^n)S$, it follows that each coefficient must be $0$ as claimed.

We now turn to the general case: suppose
\[
r\ =\sum\lambda_{i_1 i_2 \cdots i_{t-1}} h_{i_1}h_{i_2}\cdots h_{i_{t-1}}\ \in\ (u_1^n,u_2^n,\dots,u_{t-1}^n,v_1^n,v_2^n,\dots,v_{t-1}^n)S,
\]
where the sum is over indices with $t-1\le i_1\le i_2\le\cdots\le i_{t-1}\le n-1$. Let $k_1,\dots,k_{t-1}$ be integers with
\[
t-1\le k_1\le k_2\le\cdots\le k_{t-1}\le n-1.
\]
The coefficient of 
\[
u_1^{n+t-2-k_1}v_1^{k_1} u_2^{n+t-2-k_2}v_2^{k_2} \cdots u_{t-1}^{n+t-2-k_{t-1}}v_{t-1}^{k_{t-1}}
\]
in $r$ is $c\lambda_{k_1 k_2 \cdots k_{t-1}}$ where $c$ is a product of positive integers, each less than $t$. Hence $c\neq0$ in~$\FF$, and so it follows that each coefficient is $0$.
\end{proof}

While the description in terms of higher secant varieties shows that every Hankel determinantal ring is a subring of a polynomial ring, it is not in general a pure subring of that polynomial ring, as we show next; recall that a ring homomorphism $R\to S$ is \emph{pure} if
\[
R\otimes_RM\to S\otimes_RM
\]
is injective for each $R$-module $M$.

\begin{proposition}
\label{proposition:not:pure:subrings}
Let $R$ be a $t\times n$ Hankel determinantal ring, regarded as the $\FF$-subalgebra of the polynomial ring $S=\FF[u_1,\dots,u_{t-1},v_1,\dots,v_{t-1}]$, generated by the elements $h_i$ as in~\eqref{equation:secant}. If $t\ge 3$, then $R$ is not a pure subring of~$S$.
\end{proposition}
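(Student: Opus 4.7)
My plan is to derive a contradiction by comparing the top local cohomology of $R$ and $S$ at the homogeneous maximal ideal of $R$. If $R\subseteq S$ were pure then $R\to S$ would, in particular, be \emph{cyclically pure}, meaning $IS\cap R=I$ for every ideal $I\subseteq R$; I will show that this fails by exhibiting an obstruction at the level of top local cohomology.

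Let $\frakn=(h_0,\dots,h_{n+t-2})R$ be the homogeneous maximal ideal of $R$, and set $d=2t-2=\dim R$. Since $R$ is Cohen-Macaulay, $H^d_\frakn(R)\neq 0$. On the other hand, I claim $H^d_{\frakn S}(S)=0$ when $t\ge 3$: specializing $u_1=\cdots=u_{t-1}=0$, each $h_i$ with $i<n+t-2$ vanishes (the $u$-degree $n+t-2-i$ being positive), whereas $h_{n+t-2}$ restricts to $v_1^{n+t-2}+\cdots+v_{t-1}^{n+t-2}$. The vanishing locus of $\frakn S$ therefore contains the hypersurface in the affine slice $u_1=\cdots=u_{t-1}=0$ cut out by this single polynomial, which has dimension $t-2\ge 1$. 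Hence $\dim S/\frakn S\ge 1$, and the graded form of the Hartshorne-Lichtenbaum vanishing theorem, applied to the polynomial domain $S$ of dimension $d$, yields $H^d_{\frakn S}(S)=0$.

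To assemble these into a contradiction, pick the homogeneous system of parameters $\fraka=(h_0,\dots,h_{t-2},h_n,\dots,h_{n+t-2})$ for $R$; it has radical $\frakn$ and, in the Cohen-Macaulay ring $R$, is generated by a regular sequence. Standard properties of top local cohomology give
\[
H^d_\frakn(R)\ =\ H^d_\fraka(R)\ =\ \varinjlim_k\, R/\fraka^{[k]},
\]
where $\fraka^{[k]}\colonequals(h_0^k,\dots,h_{t-2}^k,h_n^k,\dots,h_{n+t-2}^k)$ and the transition maps are multiplication by $h_0 h_1\cdots h_{t-2}h_n\cdots h_{n+t-2}$; the analogous description via the top Čech cohomology holds for $S$. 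Under cyclic purity we would obtain inclusions $R/\fraka^{[k]}\hookrightarrow S/\fraka^{[k]}S$ for every $k$, and passing to the direct limit would yield an injection
\[
H^d_\frakn(R)\ \hookrightarrow\ \varinjlim_k\, S/\fraka^{[k]}S\ =\ H^d_{\fraka S}(S)\ =\ H^d_{\frakn S}(S)\ =\ 0,
\]
the middle equality because $\sqrt{\fraka S}=\sqrt{\frakn S}$ in $S$. This forces $H^d_\frakn(R)=0$, contradicting the Cohen-Macaulayness of $R$.

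The main technical input is the Hartshorne-Lichtenbaum vanishing on $S$; given the specialization $u_1=\cdots=u_{t-1}=0$ its hypothesis reduces at once to $t-2\ge 1$, which is precisely where the assumption $t\ge 3$ enters. The remainder of the argument---assembling cyclic purity into an injection of top local cohomology via the direct-limit description---is routine, once one notes that $\fraka$ is a regular sequence in $R$ but, by the same dimension calculation, fails to be one in $S$.
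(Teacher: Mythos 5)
Your argument is correct and follows essentially the same route as the paper: both show that $\frakm_RS$ has height at most $t<2t-2=\dim S$ (you via the slice $u_1=\cdots=u_{t-1}=0$, the paper via the ideal $(u_1-v_1,\dots,u_{t-1}-v_{t-1},v_1^{n+t-2}+\cdots+v_{t-1}^{n+t-2})$), invoke Hartshorne--Lichtenbaum to get $H^{2t-2}_{\frakm_R}(S)=0$, and then use purity to force the contradiction $H^{2t-2}_{\frakm_R}(R)=0$. Your explicit direct-limit/cyclic-purity assembly of the injection on top local cohomology is just a spelled-out version of the step the paper takes for granted.
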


\begin{proof}
Let $\frakm_R$ denote the homogeneous maximal ideal of $R$. The expansion of this ideal to $S$ is contained in the height $t$ ideal
\[
(u_1-v_1,\ \dots,\ u_{t-1}-v_{t-1},\ v_1^{n+t-2}+\cdots+v_{t-1}^{n+t-2})S.
\]
Since $\height\frakm_RS\le t<2t-2=\dim S$, the Hartshorne-Lichtenbaum Vanishing Theorem, for example \cite[Theorem~14.1]{24hours}, implies that
\[
H^{2t-2}_{\frakm_R}(S)\ =\ H^{2t-2}_{\frakm_RS}(S)\ =\ 0.
\]
If $R\to S$ is pure, the injectivity of $H^{2t-2}_{\frakm_R}(R)\to H^{2t-2}_{\frakm_R}(S)$ implies that $H^{2t-2}_{\frakm_R}(R)=0$, which is a contradiction since $\dim R=2t-2$.
\end{proof}

\begin{remark}
\label{remark:not:pure:subrings}
Being a pure subring of a polynomial ring is a stronger property than having rational singularities, or even having $F$-regular type; the hypersurface in~\cite[Theorem~5.1]{Singh-Swanson} has $F$-regular type, but is not a pure subring of a polynomial ring.
\end{remark}

\begin{question}
\label{question:pure:subrings} 
Is every Hankel determinantal ring a pure subring of a polynomial ring? 
\end{question}

%%%%%%%%%%%%%%%%%%%%%%%%%%%%%%%%%%%%%%%%%%%%%%%%%%%%%%%%%%%%%%%%%%%%%%%%%%%%%%%%%%
\section{The divisor class group}
%%%%%%%%%%%%%%%%%%%%%%%%%%%%%%%%%%%%%%%%%%%%%%%%%%%%%%%%%%%%%%%%%%%%%%%%%%%%%%%%%%

Consider the Hankel determinantal ring $R=\FF[x_1,\dots,x_{n+t-1}]/I_t(H)$, where $\FF$ is a field. To avoid some trivialities, we assume throughout this section that $n\ge t\ge 2$. Set $\frakp$ to be the ideal of $R$ generated by the maximal minors of the first $t-1$ rows of $H$, i.e.,
\begin{equation}
\label{equation:p}
\frakp\colonequals I_{t-1}\begin{pmatrix}
x_1 & x_2 & x_3 & \cdots & x_n\\
x_2 & x_3 & \cdots & \cdots & x_{n+1}\\
x_3 & \cdots & \cdots & \cdots & x_{n+2}\\
\vdots & \vdots & \vdots & \vdots & \vdots\\
x_{t-1} & \cdots & \cdots & \cdots & x_{n+t-2}
\end{pmatrix}.
\end{equation}
The ring $R/\frakp$ may be identified with the polynomial ring in the indeterminate $x_{n+t-1}$ over a size $(t-1)\times n$ Hankel determinantal ring; it follows that $R/\frakp$ is an integral domain of dimension $2t-3$, and hence that $\frakp$ is a prime ideal of height $1$.

For each integer $k$ with $1\le k\le n-t+2$, set $\frakp^{\la k\ra}$ to be the ideal of $R$ as below,
\[
\frakp^{\la k\ra}\colonequals I_{t-1}\begin{pmatrix}
x_1 & x_2 & x_3 & \cdots & x_{n-k+1}\\
x_2 & x_3 & \cdots & \cdots & x_{n-k+2}\\
x_3 & \cdots & \cdots & \cdots & x_{n-k+3}\\
\vdots & \vdots & \vdots & \vdots & \vdots\\
x_{t-1} & \cdots & \cdots & \cdots & x_{n-k+t-1}
\end{pmatrix}.
\]
Note that $\frakp^{\la 1\ra}=\frakp$, and that the ideal $\frakp^{\la n-t+2\ra}$ is principal. With this notation, we prove:

\begin{theorem}
\label{theorem:class:group}
Consider the Hankel determinantal ring $R\colonequals\FF[x_1,\dots,x_{n+t-1}]/I_t(H)$, for~$\FF$ a field, and $t\ge 2$. Then the divisor class group of $R$ is cyclic of order $n-t+2$, generated by the ideal $\frakp$ as in~\eqref{equation:p}. The symbolic powers of $\frakp$ are
\[
\frakp^{(k)}=\frakp^{\la k\ra}
\quad\text{ for }\ 1\le k\le n-t+2.
\]
Moreover, each of these is a maximal Cohen-Macaulay $R$-module.
\end{theorem}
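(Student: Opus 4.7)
My strategy is to establish the three assertions in tandem via a careful analysis of the ideals $\frakp^{\la k\ra}$. I first observe that $\frakp$ has height one with $R/\frakp$ Cohen--Macaulay, since $R/\frakp$ is a polynomial ring in $x_{n+t-1}$ over a $(t-1)\times n$ Hankel determinantal ring, and the latter is Cohen--Macaulay of dimension $2t-3$ by the same Eagon--Northcott argument used for $R$ itself. The ideal $\frakp^{\la n-t+2\ra}$ is principal: its matrix $H_{n-t+2}$ is $(t-1)\times(t-1)$, so it admits a single $(t-1)$-minor, namely the determinant $d$ of the leading $(t-1)\times(t-1)$ Hankel block with entries $x_1,\dots,x_{2t-3}$.

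The crux is to show that $R/\frakp^{\la k\ra}$ is Cohen--Macaulay of dimension $2t-3$ for every $1 \le k \le n-t+2$. I view the defining ideal $I_t(H)+I_{t-1}(H_k)$ in $\FF[x_1,\dots,x_{n+t-1}]$ as a mixed Hankel determinantal ideal and would establish the required Cohen--Macaulayness by one of two routes: \emph{(i)} a specialization argument from the generic determinantal ring $B=\FF[Y]/I_t(Y)$, where the analogous symbolic power $\mathfrak{P}^{(k)}$ of the ideal of maximal minors of the first $t-1$ rows of $Y$ is known to be Cohen--Macaulay, and where one must verify that the $(t-1)(n-1)$ Hankel specialization relations $Y_{i,j+1}-Y_{i+1,j}$ form a regular sequence modulo $\mathfrak{P}^{(k)}$; or \emph{(ii)} a direct Gr\"obner basis analysis exhibiting a squarefree initial ideal of $I_t(H)+I_{t-1}(H_k)$ under a suitable monomial order, and deducing Cohen--Macaulayness from shellability of the associated simplicial complex of antidiagonals. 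Once this is secured, $\frakp^{\la k\ra}$ is unmixed, height-one, and $\frakp$-primary, so $\frakp^{\la k\ra}=\frakp^{(m_k)}$ for a unique $m_k$; a valuation computation in the DVR $R_\frakp$---using the Hankel shift structure to bound the valuations of $(t-1)$-minors of $H_k$ from below by $k$, and exhibiting the minor on the rightmost $t-1$ columns of $H_k$ as achieving this minimum---then yields $m_k=k$, so $\frakp^{\la k\ra}=\frakp^{(k)}$.

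Finally, to compute the class group I apply Nagata's theorem after inverting $d$. In $R[d^{-1}]$ the leading $(t-1)\times(t-1)$ Hankel block $L$ of $H$ becomes invertible as a matrix, and the rank condition $I_t(H)=0$ allows one to eliminate all Hankel entries beyond a transcendence basis of size $2t-2$ by expressing them as rational functions whose denominators are powers of $d$; this realizes $R[d^{-1}]$ as a localization of a polynomial ring, hence a UFD. Combined with $(d)=\frakp^{(n-t+2)}$ being $\frakp$-primary, Nagata yields $\Cl(R)=\ZZ\cdot[\frakp]$ with $(n-t+2)[\frakp]=0$. The order is exactly $n-t+2$ because, for $1\le k<n-t+2$, the degree $t-1$ part of $\frakp^{\la k\ra}$ contains the $\binom{n-k+1}{t-1}>1$ linearly independent $(t-1)$-minors of $H_k$, so $\frakp^{\la k\ra}$ cannot be principal. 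The main obstacle is the Cohen--Macaulayness of $R/\frakp^{\la k\ra}$; the remaining ingredients are routine divisor-theoretic arguments once that is established.
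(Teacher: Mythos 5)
Your skeleton matches the paper's: invert the $(t-1)\times(t-1)$ leading minor $\Delta$ to realize $R_\Delta$ as a localization of a polynomial ring and apply Nagata's theorem, identify $\frakp^{\la k\ra}$ with $\frakp^{(k)}$ via a valuation computation in $R_\frakp$, and reduce everything to the Cohen--Macaulayness of $R/\frakp^{\la k\ra}$. You correctly flag that last point as the crux, but you do not prove it, and neither of your two proposed routes closes the gap. Route (i) rests on a false premise: in the generic determinantal ring $B=\FF[Y]/I_t(Y)$ the symbolic powers $\mathfrak{P}^{(k)}$ of the ideal of $(t-1)$-minors of the first $t-1$ rows are \emph{not} Cohen--Macaulay for $k\ge 2$. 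By \cite[Example~9.27(d)]{Bruns-Vetter} the only rank one maximal Cohen--Macaulay $B$-modules are $B,\,P,\,Q,\,Q^2,\dots,Q^{n-t+1}$, where $Q$ is the ideal of $(t-1)$-minors of the first $t-1$ \emph{columns} and $[Q]=-[P]$ in $\Cl(B)\cong\ZZ$; the class $k[P]$ for $k\ge2$ is not represented, so there is nothing to specialize. (The reason MCM-ness nonetheless holds for all $\frakp^{(k)}$ in the Hankel case is that $\Cl(R)$ is finite, so the specializations $\frakq^i$ of the $Q^i$ already hit every class --- but that uses the class group computation and the identification of symbolic powers, which in turn rest on the unmixedness you are trying to establish.) Route (ii) asserts a squarefree initial ideal and shellability for $I_t(H)+I_{t-1}(H_k)$ without any argument; this is a substantial claim, not a routine verification.

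The paper closes this gap by a multiplicity comparison rather than either of your routes. From the valuation lemma and the associativity formula one gets $e(R/\frakp^{\la k\ra})=\ell(R_\frakp/\frakp^{\la k\ra}R_\frakp)\,e(R/\frakp)=k\binom{n}{t-2}$. One then exhibits the explicit homogeneous system of parameters $x_1,\dots,x_{t-2},x_{n+1},\dots,x_{n+t-1}$ for $R/\frakp^{\la k\ra}$ and bounds the colength of the ideal it generates from \emph{above} by $k\binom{n}{t-2}$, using only a containment of the initial ideal (in degrevlex) in $(\bsx)+(x_{t-1},\dots,x_{n-k+1})^{t-1}+(x_t,\dots,x_n)^t$ together with an elementary length formula for such monomial ideals, proved by induction. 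Since the colength of a system of parameters always bounds the multiplicity from above, equality holds throughout, which forces $R/\frakp^{\la k\ra}$ to be Cohen--Macaulay. If you want to salvage your proposal, this is the piece you need to supply; everything else in your outline (the Nagata step, the valuation argument, and the non-principality of $\frakp^{\la k\ra}$ for $k<n-t+2$ via counting degree $t-1$ generators) is essentially the paper's argument and is sound.
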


We need a number of preliminary results.

\begin{lemma}
\label{lemma:identity}
Let $Y$ be an $m\times n$ matrix with entries in a commutative ring. Assume that $Y$ has rank less than $t$. 

\begin{enumerate}[\ \rm(1)]
\item For every choice of row and column indices, one has
\begin{multline*}
[a_1\ \dots\ a_{t-1} \mid b_1\ \dots\ b_{t-1}] \times [c_1\ \dots\ c_{t-1} \mid d_1 \ \dots\ d_{t-1}] \\
=\
[a_1\ \dots\ a_{t-1} \mid d_1\ \dots\ d_{t-1}] \times [c_1\ \dots\ c_{t-1} \mid b_1 \ \dots\ b_{t-1}].
\end{multline*}

\item Let $Y(a,b)$ denote the submatrix of $Y$ with row indices $\le a$, and column indices $\le b$. Then, for all $a<m$ and $b<n$, one has
\[
I_{t-1}(Y(a,b+1))\ I_{t-1}(Y(a+1,b))\ =\ I_{t-1}(Y(a,b))\ I_{t-1}(Y(a+1,b+1)).
\] 
\end{enumerate}
\end{lemma}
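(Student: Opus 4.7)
The plan is to prove Part~(1) by reducing to the universal case and exploiting a rank factorization, and then to deduce Part~(2) from Part~(1) by a short case analysis on generating products.

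For Part~(1), the identity is a polynomial equation in the entries of $Y$ which must hold whenever these entries satisfy the vanishing of all $t\times t$ minors, so it suffices to verify it in the generic determinantal ring $D\colonequals\ZZ[Z]/I_t(Z)$, where $Z$ is an $m\times n$ matrix of indeterminates. My plan is to invoke the classical fact that $D$ is an integral domain (see, e.g., \cite{Bruns-Vetter}) and pass to its fraction field. Over the fraction field the image $\bar Z$ of $Z$ has linear-algebra rank at most $t-1$, and hence admits a factorization $\bar Z=UV$ with $U$ of size $m\times(t-1)$ and $V$ of size $(t-1)\times n$ (if the rank is strictly less, then every $(t-1)\times(t-1)$ minor already vanishes and the identity is trivial). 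Because $U$ has exactly $t-1$ columns, the Cauchy--Binet formula collapses to a single summand, so each $(t-1)\times(t-1)$ minor of $\bar Z$ factors as
\[
[a_1\ \dots\ a_{t-1}\mid b_1\ \dots\ b_{t-1}]_{\bar Z}\ =\ \alpha_a\,\beta_b,
\]
where $\alpha_a\colonequals[a_1\ \dots\ a_{t-1}\mid 1\ \dots\ t-1]_U$ and $\beta_b\colonequals[1\ \dots\ t-1\mid b_1\ \dots\ b_{t-1}]_V$. The identity $\alpha_a\beta_b\alpha_c\beta_d=\alpha_a\beta_d\alpha_c\beta_b$ is then immediate and, read back, is exactly $[a\mid b][c\mid d]=[a\mid d][c\mid b]$.

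For Part~(2), both ideals are generated by products $\mu\nu$ of $(t-1)\times(t-1)$ minors with specified row and column bounds. For the inclusion $\subseteq$, a generating product has $\mu$ with row indices $\le a$ and column indices $\le b+1$, and $\nu$ with row indices $\le a+1$ and column indices $\le b$. If all of $\mu$'s columns are $\le b$, or symmetrically all of $\nu$'s rows are $\le a$, then $\mu\nu$ already lies in $I_{t-1}(Y(a,b))\cdot I_{t-1}(Y(a+1,b+1))$ by the obvious inclusions of subideals. Otherwise the last column of $\mu$ is $b+1$ and the last row of $\nu$ is $a+1$; applying Part~(1) to swap the column-index sets of $\mu$ and $\nu$ produces an alternative expression for $\mu\nu$ in which the first factor has row indices $\le a$ and column indices $\le b$ (and so lies in $I_{t-1}(Y(a,b))$), while the second has row indices $\le a+1$ and column indices $\le b+1$ (and so lies in $I_{t-1}(Y(a+1,b+1))$). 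The reverse inclusion is entirely symmetric.

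The main obstacle is the invocation in Part~(1) of the classical fact that the generic determinantal ring is an integral domain; the rest of the argument is a routine specialization together with a straightforward case split driven by whether the ``extra'' row $a+1$ and column $b+1$ are actually used by the minors in the generating product.
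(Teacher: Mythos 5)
Your proof is correct and follows essentially the same route as the paper: reduce to the generic determinantal ring over $\ZZ$, use that it is a domain to pass to the fraction field, and observe that the matrix of $(t-1)\times(t-1)$ minors has rank at most one (the paper phrases this via the exterior power $\Lambda^{t-1}\phi$ having rank $\le 1$, you via a rank factorization and Cauchy--Binet, which is the same fact in concrete form). Your spelled-out deduction of (2) from (1) is also fine --- indeed simpler than you make it, since applying (1) to swap the column sets works uniformly and no case split is needed --- and matches what the paper leaves as ``immediate.''
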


\begin{proof}
First note that (2) follows immediately from (1). To prove (1), we may assume right away that the underlying ring is $B\colonequals\ZZ[X]/I_t(X)$, with $X$ an $m\times n$ matrix of indeterminates, and that $Y$ is the image of $X$ in $B$. Since $B$ is a domain, it suffices to verify the displayed identity in the fraction field $\KK$ of $B$. Consider the linear map
\[
\phi\colon\KK^{m}\to\KK^{n}
\]
given by the image of $Y$. The map $\phi$ has rank less than $t$, so the exterior power
\[
\Lambda^{t-1}\phi\colon\Lambda^{t-1} \KK^{m}\to\Lambda^{t-1} \KK^{n}
\]
is a linear map of rank at most $1$. For rows $i_1,\dots,i_{t-1}$ and columns $j_1,\dots,j_{t-1}$ of $\phi$, the corresponding matrix entry of $\Lambda^{t-1} \phi$ is the determinant
\[
{[i_1\ \dots\ i_{t-1} \mid j_1\ \dots\ j_{t-1}]}_Y.
\]
The required identity is now immediate from the fact that the size $2$ minors of the matrix for $\Lambda^{t-1}\phi$ are zero.
\end{proof}

\begin{lemma}
\label{lemma:valuation}
Let $\frakp$ be as in~\eqref{equation:p}, and let $v$ denote the valuation of the discrete valuation ring~$R_\frakp$. Then, for integers $1\le i_1<i_2<\cdots<i_{t-1}\le n$, the minors of $H$ satisfy
\[
v([1\ \dots\ t-1 \mid i_1\ \dots\ i_{t-1}])\ =\ n+1-i_{t-1}.
\]
Consequently for $\frakp^{\la k\ra}$ as defined earlier, and $k$ with $1\le k\le n-t+2$, one has
\[
\frakp^{\la k\ra}\subseteq\frakp^{(k)}
\quad\text{and}\quad
\frakp^{\la k\ra}R_\frakp=\frakp^{(k)}R_\frakp.
\]
\end{lemma}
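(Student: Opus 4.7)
The plan is to establish the valuation formula first, from which both symbolic-power statements follow at once. Since $R$ is a normal domain and $\frakp$ is a height-one prime, $R_\frakp$ is a DVR with a well-defined valuation~$v$.

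My approach is to identify $v$ with the restriction of a valuation on the polynomial ring $S=\FF[u_1,\dots,u_{t-1},v_1,\dots,v_{t-1}]$ under the embedding $R\hookrightarrow S$ of Section~2, for which $x_i\mapsto h_{i-1}$ as in \eqref{equation:secant}. Let $\frakq:=(u_1)S$, a height-one prime of $S$. I would first verify that $\frakq\cap R=\frakp$. The inclusion $\frakp\subseteq\frakq\cap R$ is immediate from the factorization recalled below (every $\phi(m_I)$ is divisible by $u_1$); for the reverse, one checks that the composite $R\to S/\frakq$ has image of dimension $2t-3$ (namely, the $(t-1)\times n$ Hankel determinantal ring of dimension $2t-4$ together with one extra transcendental element coming from $v_1$), forcing $\frakq\cap R$ to have height one and hence to equal $\frakp$. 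Thus $R_\frakp\hookrightarrow S_\frakq$.

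The core computation uses a Cauchy--Binet factorization of the image matrix, writing $\phi(H)=AB$ where $A$ is the $t\times(t-1)$ matrix with $A_{il}=u_l^{t-i}v_l^{i-1}$ and $B$ is the $(t-1)\times n$ matrix with $B_{lj}=u_l^{n-j}v_l^{j-1}$; this is precisely the rank-one decomposition of $M=\phi(H)$ recorded in Section~2. Under $\phi$, the minor $[1\ \dots\ t-1\mid i_1\ \dots\ i_{t-1}]$ factors as $\Delta_A\cdot\Delta_B(I)$, where $\Delta_A:=\det A(\{1,\dots,t-1\};\,:)$ and $\Delta_B(I):=\det B(:,I)$. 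Column-scaling yields
\[
\Delta_A \;=\; \prod_{l}u_l\;\cdot\;\prod_{l<l'}\bigl(u_lv_{l'}-u_{l'}v_l\bigr),
\]
so $v_\frakq(\Delta_A)=1$. For $\Delta_B(I)$, factor $u_1^{n-i_{t-1}}$ out of row~$1$ (the minimum $u_1$-power appearing there); reducing modulo $u_1$ leaves only the $(1,t-1)$-entry of row~$1$ nonzero, so Laplace expansion along row~$1$ reduces the residue to a generalized Vandermonde determinant in $u_2,v_2,\dots,u_{t-1},v_{t-1}$, which is nonvanishing. Hence $v_\frakq(\Delta_B(I))=n-i_{t-1}$, and $v_\frakq(\phi(m_I))=n+1-i_{t-1}$. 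In particular $m_{(1,\dots,t-2,n)}\in\frakp$ has $v_\frakq$-valuation~$1$, so the extension $R_\frakp\hookrightarrow S_\frakq$ is unramified, giving $v=v_\frakq|_R$ and the desired formula.

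The symbolic-power claims follow at once. Each generator $m_I$ of $\frakp^{\la k\ra}$ satisfies $i_{t-1}\le n-k+1$, so $v(m_I)\ge k$; hence $\frakp^{\la k\ra}\subseteq\frakp^{(k)}$. The equality $\frakp^{\la k\ra}R_\frakp=\frakp^{(k)}R_\frakp$ is automatic since $m_{(1,\dots,t-2,n-k+1)}\in\frakp^{\la k\ra}$ has $v=k$ exactly, and hence generates $\frakp^kR_\frakp=\frakp^{(k)}R_\frakp$. The main obstacle is the $\Delta_B(I)$ computation — tracking the minimum $u_1$-degree in row~$1$ and verifying the nonvanishing of the residue modulo~$u_1$; a secondary point is the dimension count establishing $\frakq\cap R=\frakp$.
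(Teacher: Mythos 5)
Your argument is correct in its main thrust, but it takes a genuinely different route from the paper's. The paper stays entirely inside $R$: it combines the exterior-power identity of Lemma~\ref{lemma:identity}(1) with the Hankel shift $[2\ \dots\ t\mid i_1\ \dots\ i_{t-1}]=[1\ \dots\ t-1\mid i_1+1\ \dots\ i_{t-1}+1]$ to run a descending induction on $i_{t-1}$, starting from the fact that the minors with $i_{t-1}=n$ become units times $\pi=[1\ \dots\ t-1\mid n-t+2\ \dots\ n]$ in $R_\frakp$, and only afterwards identifies $\pi$ as a uniformizer. You instead pass to the secant-variety embedding $R\hookrightarrow S$, pull the valuation back from the $u_1$-adic valuation $v_\frakq$ of $S$, and compute every minor in closed form via the Cauchy--Binet factorization $\phi(H)=AB$. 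Your route buys an explicit geometric picture (the divisor of $\frakp$ is the pullback of $\{u_1=0\}$) and dispenses with the induction; the paper's route avoids having to control the behaviour of $\frakp$ under the non-finite extension $R\subseteq S$, which is exactly where your write-up is thinnest.

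Concretely, the identity $\frakq\cap R=\frakp$ is indispensable (otherwise $v_\frakq|_R$ is centered at some other prime and says nothing about $v$), and your parenthetical justification of the dimension count is not quite right as stated: reducing the $h_i$ modulo $u_1$ kills the $l=1$ summand for $i<n+t-2$ but leaves $v_1^{n+t-2}$ inside $\bar h_{n+t-2}$, so the image of $R$ in $S/(u_1)$ is $T[\bar h_{n+t-2}]$ where $T=\FF[\bar h_0,\dots,\bar h_{n+t-3}]$ consists of all but the \emph{last} of the standard generators of a $(t-1)\times(n+1)$ (not $(t-1)\times n$) Hankel determinantal ring $T'$ in $u_2,\dots,u_{t-1},v_2,\dots,v_{t-1}$. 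To get dimension $2t-3$ you need both that $\bar h_{n+t-2}$ is transcendental over $T$ (clear, as it alone involves $v_1$) and that $\dim T=\dim T'=2t-4$; the latter holds because Laplace expansion of the vanishing minor $[1\ \dots\ t-1\mid 1\ \dots\ t-2\ \ n+1]$ of the Hankel matrix of $T'$ along its last column exhibits the omitted generator as algebraic over $\operatorname{Frac}(T)$ (its cofactor $[1\ \dots\ t-2\mid 1\ \dots\ t-2]$ is a nonzero element of $T$). With that supplied, the remaining steps --- $v_\frakq(\Delta_A)=1$, $v_\frakq(\Delta_B(I))=n-i_{t-1}$ via the nonvanishing (over any field) of the generalized Vandermonde residue, unramifiedness from the existence of a minor of value $1$, and the deduction of the two assertions about $\frakp^{\la k\ra}$ --- are all sound.
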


\begin{proof}
Set $\pi\colonequals [1\ \dots\ t-1\mid n-t+2\ \dots\ n]$. We will prove inductively that
\begin{equation}
\label{equation:val1}
v([1\ \dots\ t-1 \mid i_1\ \dots\ i_{t-1}])\ =\ (n+1-i_{t-1})v(\pi),
\end{equation}
with the base case for the induction being $i_{t-1}=n$. By Lemma~\ref{lemma:identity}~(1) one has
\begin{equation}
\label{equation:val2}
[1\ \dots\ t-1 \mid i_1\ \dots\ i_{t-1}] \times [2\ \dots\ t \mid n-t+2\ \dots\ n]
\ =\ \pi\times [2\ \dots\ t \mid i_1\ \dots\ i_{t-1}].
\end{equation}
We work in the ring $R_\frakp$, where the minor
\[
[2\ \dots\ t \mid n-t+2\ \dots\ n]
\]
is a unit. If~$i_{t-1}=n$, then $[2\ \dots\ t \mid i_1\ \dots\ i_{t-1}]$ is a unit in $R_\frakp$ as well, and it follows that
\[
v([1\ \dots\ t-1 \mid i_1\ \dots\ i_{t-1}])\ =\ v(\pi),
\]
which proves the base case. For the inductive step, assume that $i_{t-1}<n$ and that~\eqref{equation:val1} holds for larger values of $i_{t-1}$. Since
\[
[2\ \dots\ t \mid i_1\ \dots\ i_{t-1}]\ =\ [1\ \dots\ t-1 \mid i_1+1\ \dots\ i_{t-1}+1],
\]
the inductive hypothesis gives
\[
v([2\ \dots\ t \mid i_1\ \dots\ i_{t-1}])\ =\ (n-i_{t-1})v(\pi).
\]
Combining this with~\eqref{equation:val2}, it follows that
\[
v([1\ \dots\ t-1 \mid i_1\ \dots\ i_{t-1}])\ =\ v(\pi)+(n-i_{t-1})v(\pi)\ =\ (n+1-i_{t-1})v(\pi),
\]
which completes the proof of~\eqref{equation:val1}.

Since the valuation of each minor generating the ideal $\frakp$ is a positive integer multiple of~$v(\pi)$, it follows that $\pi$ generates the maximal ideal of $R_\frakp$, and that $v(\pi)=1$. Lastly, note that the minors that generate the ideal~$\frakp^{\la k\ra}$ are precisely those with valuation at least $k$.
\end{proof}

The following is a slight modification of \cite[Lemma~4]{JWatanabe}, adapted to our notation, and with a shorter proof. 

\begin{lemma}
\label{lemma:watanabe}
Let $R$ be a $t\times n$ Hankel determinantal ring over a field $\FF$. Set
\[
\Delta\colonequals[1\ \dots\ t-1 \mid 1\ \dots\ t-1],
\]
viewed as an element of $R$. Then:
\begin{enumerate}[\ \rm(1)]
\item the ideal $\Delta R$ has radical $\frakp$, for $\frakp$ as in~\eqref{equation:p},
\item $R_\Delta = {\FF[x_1,\ \dots,\ x_{2t-2}}]_\Delta$, and
\item the elements $x_1,\ \dots,\ x_{2t-2}$ of $R$ are algebraically independent over~$\FF$.
\end{enumerate}
\end{lemma}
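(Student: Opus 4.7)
The plan is to prove (2) first and deduce (3) directly from it, then establish (1) by a multiplicity computation that invokes Lemma~\ref{lemma:valuation}. For (2), I would use the vanishing of the $t\times t$ minor $[1,2,\dots,t \mid 1,2,\dots,t-1,k]$ in $R$, for each $k\ge t$, and expand it along the last column to obtain
\[
x_{k+t-1}\,\Delta\ =\ -\sum_{j=1}^{t-1}(-1)^{j+t}\,x_{j+k-1}\,[1,\dots,\hat{j},\dots,t \mid 1,\dots,t-1].
\]
Each $(t-1)\times(t-1)$ cofactor has entries of index at most $2t-2$, so it lies in $\FF[x_1,\dots,x_{2t-2}]$, while the factors $x_{j+k-1}$ for $j<t$ have indices between $k$ and $k+t-2$. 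Starting at $k=t$ (so all such indices are then $\le 2t-2$) and inducting on $k$, each $x_{k+t-1}$ lies in $\FF[x_1,\dots,x_{2t-2}][\Delta^{-1}]$. This yields the inclusion $R\subseteq\FF[x_1,\dots,x_{2t-2}]_\Delta$ inside $R_\Delta$; the reverse inclusion is obvious.

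For (3), once (2) is established, the inclusion of graded domains $\FF[x_1,\dots,x_{2t-2}]\hookrightarrow R$ becomes an equality after localizing at $\Delta$, so these rings share the fraction field, namely $\mathrm{Frac}(R)$, which has transcendence degree $\dim R=2t-2$ over $\FF$. As the field is generated by exactly $2t-2$ elements, they must be algebraically independent.

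For (1), clearly $\Delta\in\frakp$ implies $\sqrt{\Delta R}\subseteq\frakp$; the reverse inclusion is the substantive content, and I would obtain it by comparing multiplicities. Since $\Delta$ is a homogeneous non-zero-divisor of degree $t-1$ in the Cohen-Macaulay graded domain $R$, we have $e(R/\Delta R)=(t-1)\,e(R)=(t-1)\binom{n}{t-1}$. The ring $R/\frakp$ is a polynomial ring in $x_{n+t-1}$ over a $(t-1)\times n$ Hankel determinantal ring, so $e(R/\frakp)=\binom{n}{t-2}$. By Lemma~\ref{lemma:valuation}, $v_\frakp(\Delta)=n-t+2$. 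Since $R$ is Cohen-Macaulay, $\Delta R$ is unmixed, and the associativity formula for multiplicity reads
\[
e(R/\Delta R)\ =\ \sum_{\frakq}v_\frakq(\Delta)\,e(R/\frakq),
\]
summed over the height-$1$ minimal primes $\frakq$ of $\Delta R$. The arithmetic identity $(n-t+2)\binom{n}{t-2}=(t-1)\binom{n}{t-1}$ shows that the $\frakp$-contribution alone already exhausts $e(R/\Delta R)$, leaving no room for additional minimal primes. Hence $\frakp$ is the unique minimal prime of $\Delta R$, giving $\sqrt{\Delta R}=\frakp$. The main obstacle is setting up the multiplicity computation carefully, in particular identifying $R/\frakp$ as a polynomial ring over a $(t-1)\times n$ Hankel determinantal ring and invoking the associativity formula in its graded form; once these are pinned down, the numerical identity closes the argument.
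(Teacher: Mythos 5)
Your proofs of parts (2) and (3) are essentially the paper's: the same Laplace expansion of the vanishing minor $[1\ \dots\ t \mid 1\ \dots\ t-1\ a]$ along its last column drives the induction for (2), and (3) is the same transcendence-degree count against $\dim R=2t-2$. For part (1) you take a genuinely different and correct route. The paper proves $\frakp\subseteq\sqrt{\Delta R}$ via the exchange identity of Lemma~\ref{lemma:identity}(2) together with the Hankel shift $I_{t-1}(Y(t-1,n-k+1))=I_{t-1}(Y(t,n-k))$, which yields the explicit chain $(\frakp^{\la k\ra})^2\subseteq\frakp^{\la k+1\ra}$ and hence $\frakp^{2^{n-t+1}}\subseteq\frakp^{\la n-t+2\ra}=\Delta R$. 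You instead count multiplicities: $e(R/\Delta R)=(t-1)\binom{n}{t-1}$ since $\Delta$ is a homogeneous nonzerodivisor of degree $t-1$, the $\frakp$-component contributes $v_\frakp(\Delta)\,e(R/\frakp)=(n-t+2)\binom{n}{t-2}$ by Lemma~\ref{lemma:valuation}, and the numerical identity $(n-t+2)\binom{n}{t-2}=(t-1)\binom{n}{t-1}$ leaves no room for further components in the associativity formula (all minimal primes of $\Delta R$ do appear in that formula, since $R$ is a Cohen--Macaulay graded domain and hence $R/\Delta R$ is equidimensional of dimension $2t-3$). As for what each buys: the paper's argument is purely determinantal, needs no multiplicity input, and produces the explicit containment of powers $(\frakp^{\la k\ra})^2\subseteq\frakp^{\la k+1\ra}$; your argument is shorter given Lemma~\ref{lemma:valuation} and the multiplicity facts from Section~1, and is in the same spirit as the associativity-formula computation the paper later carries out in Lemma~\ref{lemma:cm}, but it delivers only the equality of radicals rather than an explicit exponent.
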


\begin{proof} (1) In the notation of Lemma~\ref{lemma:identity}, the ideal $\frakp^{\la k\ra}$ is $I_{t-1}(Y(t-1,n-k+1))$, where~$Y$ is the image of the Hankel matrix $H$ in $R$. Since
\[
I_{t-1}(Y(t-1,n-k+1))\ =\ I_{t-1}(Y(t,n-k)),
\]
Lemma~\ref{lemma:identity}~(2) gives
\begin{multline*}
I_{t-1}(Y(t-1,n-k+1))^2\ =\ I_{t-1}(Y(t-1,n-k))\ I_{t-1}(Y(t,n-k+1))\\
\subset \ I_{t-1}(Y(t-1,n-k))
\end{multline*}
i.e.,
\[
(\frakp^{\la k\ra})^2\ \subset\ \frakp^{\la k+1\ra}.
\]
Since $\frakp^{\la 1\ra}=\frakp$ and $\frakp^{\la n-t+2\ra}=\Delta R$, we are done.

(2) For each $a\ge t$, we have $[1\ \dots\ t \mid 1\ \dots\ t-1 \ a]=0$ in $R$, so
\[
x_{t+a-1}\Delta\ \in\ \FF[x_1, \dots, x_{t+a-2}].
\]
Since $\Delta\in\FF[x_1, \dots, x_{t+a-2}]$, it follows that 
\[
{\FF[x_1, \dots, x_{t+a-1}]}_\Delta\ =\ {\FF[x_1, \dots, x_{t+a-2}]}_\Delta.
\]
Iterating the above display, one gets the desired result.

(3) The dimension of $R$ is $2t-2$, hence $\dim \FF[x_1,\ \dots,\ x_{2t-2}]=2t-2$. 
\end{proof} 

\begin{lemma}
\label{lemma:cm}
For each $k$ with $1\le k\le n-t+2$, the ring $R/\frakp^{\la k\ra}$ is Cohen-Macaulay. Hence the ideal $\frakp^{\la k\ra}$ is a maximal Cohen-Macaulay $R$-module; in particular, it is reflexive.
\end{lemma}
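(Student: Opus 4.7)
The plan is to lift to a generic matrix, invoke the Cohen-Macaulayness of an analogous quotient of the generic determinantal ring, and then descend via a regular sequence of linear forms.

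Let $Y=(Y_{ij})$ be a $t\times n$ matrix of indeterminates over $\FF$, set $A\colonequals\FF[Y]$ and $B\colonequals A/I_t(Y)$, and let $Y^{(k)}$ denote the upper-left $(t-1)\times(n-k+1)$ submatrix of $Y$. Put $\frakq^{\la k\ra}\colonequals I_{t-1}(Y^{(k)})B$. The $(t-1)(n-1)$ linear forms $L\colonequals\{Y_{i,j+1}-Y_{i+1,j}\}$ are part of a system of parameters on the Cohen-Macaulay ring $B$, hence form a regular sequence on $B$; the Hankel substitution $Y_{ij}\mapsto x_{i+j-1}$ induced by modding out by $L$ realizes
\[
B/LB\ \cong\ R\qquad\text{and}\qquad(B/\frakq^{\la k\ra})\big/L(B/\frakq^{\la k\ra})\ \cong\ R/\frakp^{\la k\ra}.
\]
The crucial input is the Cohen-Macaulayness of $B/\frakq^{\la k\ra}$: the generating $(t-1)$-minors of $\frakq^{\la k\ra}$ form a poset ideal in the standard straightening-law presentation of $B$, so $B/\frakq^{\la k\ra}$ is itself an algebra with straightening law, in particular Cohen-Macaulay; see \cite{Bruns-Vetter}.

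Granting this, it suffices to verify that $L$ is a regular sequence on $B/\frakq^{\la k\ra}$, which reduces to a dimension count. The ideal $\frakq^{\la k\ra}$ is nonzero (it contains the $(t-1)$-minor $[1\,\ldots\,t-1\mid 1\,\ldots\,t-1]_Y$, which is nonzero in $B$) and lies in the height-one prime $\frakq^{\la 1\ra}\subset B$ generating the divisor class group of $B$, so $\height\frakq^{\la k\ra}=1$ and
\[
\dim B/\frakq^{\la k\ra}\ =\ (t-1)(n+1)-1\ =\ (t-1)(n-1)+(2t-3).
\]
On the other hand, $\frakp^{\la k\ra}$ has height one in $R$, being sandwiched between the nonzero principal ideal $(\Delta)$ (from Lemma~\ref{lemma:watanabe}) and the height-one prime $\frakp$, so $\dim R/\frakp^{\la k\ra}=2t-3$. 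Therefore the $(t-1)(n-1)$ linear forms in $L$ are a system of parameters on the Cohen-Macaulay ring $B/\frakq^{\la k\ra}$ and hence a regular sequence; passing to the quotient, $R/\frakp^{\la k\ra}$ is Cohen-Macaulay of dimension $2t-3$.

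The maximal Cohen-Macaulay assertion then follows from the depth lemma applied to $0\to\frakp^{\la k\ra}\to R\to R/\frakp^{\la k\ra}\to 0$: since $\operatorname{depth}R=2t-2$ and $\operatorname{depth}R/\frakp^{\la k\ra}=2t-3$, one obtains $\operatorname{depth}_R\frakp^{\la k\ra}=2t-2=\dim R$, so $\frakp^{\la k\ra}$ is a maximal Cohen-Macaulay $R$-module. Reflexivity is automatic, since a torsion-free rank-one module of depth at least two over a normal domain is reflexive. The main obstacle is the Cohen-Macaulayness of $B/\frakq^{\la k\ra}$ in the generic setting: this is classical but relies on ASL machinery, so one must justify that the generating minors of $\frakq^{\la k\ra}$ constitute a poset ideal in the relevant straightening poset.
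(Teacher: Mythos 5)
The key input of your argument---that $B/\frakq^{\la k\ra}$ is Cohen--Macaulay---is false for $k\ge 2$, and this sinks the specialization strategy. Take $t=2$, $n=3$, $k=2$: then $B=\FF[Y]/I_2(Y)$ for a $2\times 3$ matrix and $\frakq^{\la 2\ra}=(Y_{11},Y_{12})B$, so
\[
B/\frakq^{\la 2\ra}\ \cong\ \FF[Y_{13},Y_{21},Y_{22},Y_{23}]/(Y_{13}Y_{21},\ Y_{13}Y_{22}),
\]
whose minimal primes $(Y_{13})$ and $(Y_{21},Y_{22})$ have different heights; the ring is not equidimensional, hence not Cohen--Macaulay. (Your generators do form a poset ideal, so the quotient is again an ASL, but an ASL is Cohen--Macaulay only when the underlying poset is wonderful; here the complementary poset $\{Y_{13},Y_{21},Y_{22},Y_{23}\}$ has maximal chains of different lengths.) Consequently the step ``system of parameters on a Cohen--Macaulay ring, hence a regular sequence'' also fails: in this example $Y_{12}-Y_{21}$ maps to $-Y_{21}$, which is a zerodivisor on $B/\frakq^{\la 2\ra}$ because $Y_{13}Y_{21}=0$ while $Y_{13}\ne 0$. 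The underlying obstruction is that the rank-one maximal Cohen--Macaulay modules over the generic ring are $B,P,Q,\dots,Q^{n-t+1}$, with the row-side symbolic powers $P^{(k)}$ \emph{not} Cohen--Macaulay for $k\ge 2$ (see Remark~\ref{remark:mcm}); the identity $I_{t-1}(Y(t-1,n-k+1))=I_{t-1}(Y(t,n-k))$ that makes $\frakp^{\la k\ra}$ behave well is special to the Hankel situation and has no generic analogue, so the naive generic lift is the wrong object.

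The paper instead argues directly over $R$: it computes $e(R/\frakp^{\la k\ra})=k\binom{n}{t-2}$ from the associativity formula for multiplicities together with Lemma~\ref{lemma:valuation}, bounds the colength of $\frakp^{\la k\ra}$ plus an explicit homogeneous system of parameters from above by the same number via a passage to initial ideals, and concludes Cohen--Macaulayness from the resulting equality of length and multiplicity. Your final paragraph---deducing that $\frakp^{\la k\ra}$ is a maximal Cohen--Macaulay module and reflexive once $R/\frakp^{\la k\ra}$ is known to be Cohen--Macaulay of dimension $2t-3$---is correct, but it rests on the first assertion, which your argument does not establish.
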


\begin{proof}
Since $R/\frakp$ is a polynomial extension of a $(t-1)\times n$ Hankel determinantal ring, its multiplicity is
\[
e(R/\frakp)\ =\ \binom{n}{t-2}.
\]
Fix $k$ with $1\le k\le n-t+2$. Since $\Delta \in\frakp^{\la k\ra}$, it follows from Lemma~\ref{lemma:watanabe} that $\frakp^{\la k\ra}$ has radical~$\frakp$. The associativity formula for multiplicities, \cite[Corollary~4.7.8]{Bruns-Herzog}, then gives the first equality in
\[
e(R/\frakp^{\la k\ra})\ =\ \ell\left(\frac{R_\frakp}{\frakp^{\la k\ra}R_\frakp}\right)e(R/\frakp)\ =\ k\binom{n}{t-2},
\]
while the second equality follows from Lemma~\ref{lemma:valuation}.

Let $A$ be the polynomial ring $\FF[x_1,\dots,x_{n+t-1}]$, and let $P_k$ be the inverse image of $\frakp^{\la k\ra}$ under the canonical surjection $A\to R$. The images of the indeterminates
\[
\bsx\colonequals x_1, \dots, x_{t-2}, x_{n+1}, \dots, x_{n+t-1}
\]
are a homogeneous system of parameters for~$A/P_k = R/\frakp^{\la k\ra}$. Set
\[
J\colonequals P_k+(\bsx)A.
\]
Using, for example, \cite[Corollary~4.7.11]{Bruns-Herzog}, one has
\begin{equation}
\label{equation:inequalities}
\ell(A/J)\ \ge\ e\big(\bsx,R/\frakp^{\la k\ra}\big)\ \ge\ e\big(R/\frakp^{\la k\ra}\big)\ =\ k\binom{n}{t-2}.
\end{equation}
We claim that
\[
\ell(A/J)\ \le\ k\binom{n}{t-2}.
\]
Assuming the claim, all the terms in~\eqref{equation:inequalities} are equal, but then $R/\frakp^{\la k\ra}$ is Cohen-Macaulay using, again, \cite[Corollary~4.7.11]{Bruns-Herzog}.

To prove the claim, consider the degrevlex order on $A$ induced by
\[
x_1>x_2>\cdots>x_{n+t-1}.
\]
Then the initial ideal of $J$ contains the ideal
\[
(\bsx)\ +\ (x_{t-1}, \dots, x_{n-k+1})^{t-1}\ +\ (x_t, \dots, x_n)^t,
\]
so it suffices to verify that the length of
\[
\frac{\FF[x_{t-1},\dots,x_n]}{(x_{t-1}, \dots, x_{n-k+1})^{t-1} + (x_t, \dots, x_n)^t}
\]
is at most
\[
k\binom{n}{t-2}.
\]
This is immediate from the following lemma.
\end{proof}

\begin{lemma}
Let $\FF$ be a field, and consider integers $t\ge 2$ and $1\le r\le s$. Then
\[
\ell\left(\frac{\FF[y_1,\dots,y_s]}{(y_1, \dots, y_r)^{t-1} + (y_2, \dots, y_s)^t}\right)
\ =\ (s-r+1)\binom{s+t-2}{t-2}.
\]
\end{lemma}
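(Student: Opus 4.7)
The plan is to compute the length by directly counting standard monomials for the monomial ideal $J\colonequals(y_1,\dots,y_r)^{t-1}+(y_2,\dots,y_s)^t$. A monomial $y_1^{a_1}\cdots y_s^{a_s}$ lies outside $J$ precisely when
\[
a_1+a_2+\cdots+a_r\;\le\;t-2\quad\text{and}\quad a_2+\cdots+a_s\;\le\;t-1,
\]
so $\ell(\FF[y_1,\dots,y_s]/J)$ equals the number of nonnegative integer tuples $(a_1,\dots,a_s)$ satisfying these two constraints simultaneously.

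To execute the count, I would stratify by fixing $k\colonequals a_2+\cdots+a_r$, the joint degree in the overlap variables $y_2,\dots,y_r$. For each $k\in\{0,\dots,t-2\}$ the three remaining blocks of exponents decouple, and standard multiset counts produce the closed form
\[
\ell\;=\;\sum_{k=0}^{t-2}\binom{k+r-2}{k}\,(t-1-k)\,\binom{t-1-k+s-r}{s-r},
\]
where the three factors count respectively the tuples $(a_2,\dots,a_r)$ of sum $k$, the admissible values of $a_1\in\{0,\dots,t-2-k\}$, and the tuples $(a_{r+1},\dots,a_s)$ of sum at most $t-1-k$.

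The final step is the evaluation of this sum. I would first apply the absorption identity $(t-1-k)\binom{t-1-k+s-r}{s-r}=(s-r+1)\binom{t-1-k+s-r}{s-r+1}$ to pull the desired factor $s-r+1$ outside the sum, and then use the symmetry $\binom{t-1-k+s-r}{s-r+1}=\binom{t-1-k+s-r}{t-2-k}$ to recognize what remains as a Chu--Vandermonde convolution $\sum_{k=0}^{t-2}\binom{(r-2)+k}{k}\binom{(s-r+1)+(t-2-k)}{t-2-k}$, whose value is $\binom{(r-2)+(s-r+1)+(t-2)+1}{t-2}=\binom{s+t-2}{t-2}$. No genuine obstacle is anticipated: the content of the proof is the stratified count, after which the identity collapses by a routine application of Chu--Vandermonde. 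The only minor care needed is the boundary $r=1$, which is clean under the polynomial convention $\binom{-1}{0}=1$ (only the $k=0$ term survives) or may simply be verified by hand, since the two defining conditions reduce to $a_1\le t-2$ and $a_2+\cdots+a_s\le t-1$ and the resulting product $(t-1)\binom{s+t-2}{t-1}$ already equals $s\binom{s+t-2}{t-2}$.
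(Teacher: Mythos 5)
Your argument is correct, and it takes a genuinely different route from the paper's. You count the standard monomials of the monomial ideal $J=(y_1,\dots,y_r)^{t-1}+(y_2,\dots,y_s)^t$ directly: the monomials outside $J$ are exactly those with $a_1+\cdots+a_r\le t-2$ and $a_2+\cdots+a_s\le t-1$, and stratifying by $k=a_2+\cdots+a_r$ decouples the count into the three factors you give; the absorption identity and the Chu--Vandermonde convolution $\sum_{k}\binom{a+k}{k}\binom{b+n-k}{n-k}=\binom{a+b+n+1}{n}$ with $a=r-2$, $b=s-r+1$, $n=t-2$ then collapse the sum to $(s-r+1)\binom{s+t-2}{t-2}$. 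All of these steps check out, including your handling of the $r=1$ boundary. The paper instead proceeds by induction on $t$ and $s$: after disposing of $r=1$ via a tensor-product decomposition, it uses the short exact sequence $0\to V/(I:y_2)\to V/I\to V/(I+y_2V)\to 0$ together with the computation $(I:y_2)=(y_1,\dots,y_r)^{t-2}+(y_2,\dots,y_s)^{t-1}$, so that the two outer lengths are instances of the same formula with $(t,s)$ replaced by $(t-1,s)$ and $(t,s-1)$, and Pascal's rule finishes. Your approach is more explicit, exhibiting the length as a transparent lattice-point count, at the cost of a routine hypergeometric identity; the paper's induction avoids all binomial manipulation beyond Pascal's rule but requires the colon-ideal computation. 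Both are complete proofs.
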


\begin{proof}
When $r=1$, the length in question is that of
\[
\frac{\FF[y_1]}{(y_1^{t-1})}\otimes_{\FF}\frac{\FF[y_2,\dots,y_s]}{(y_2, \dots, y_s)^t},
\]
which equals
\[
(t-1)\binom{s-1+t-1}{t-1}\ =\ s\binom{s+t-2}{t-2},
\]
so the asserted formula holds. Assume for the rest that $r\ge 2$.

The case when $t=2$ is readily checked as well; we proceed by induction on $t$ and $s$. Set
\[
V\colonequals\FF[y_1,\dots,y_s] 
\quad\text{and}\quad
I\colonequals (y_1, \dots, y_r)^{t-1} + (y_2, \dots, y_s)^t,
\]
and consider the exact sequence
\[
0\to V/(I:y_2) \to V/I \to V/(I+y_2V) \to 0.
\]
Since $(I:y_2) = (y_1, \dots, y_r)^{t-2} + (y_2, \dots, y_s)^{t-1}$, the inductive hypothesis gives
\begin{align*}
\ell(V/I) &\ =\ \ell(V/(I:y_2)) + \ell(V/(I+y_2V)) \\
&\ =\ (s-r+1)\binom{s+(t-1)-2}{(t-1)-2} + ((s-1)-(r-1)+1)\binom{(s-1)+t-2}{t-2} \\
&\ =\ (s-r+1)\binom{s+t-2}{t-2}.\qedhere
\end{align*}
\end{proof}

\begin{proof}[Proof of Theorem~\ref{theorem:class:group}]
By Lemma~\ref{lemma:watanabe}, the ring $R_\Delta$ is a localization of a polynomial ring, and hence a UFD. Nagata's theorem, e.g.,~\cite[page~315]{Bruns-Herzog}, then implies that $\Cl(R)$ is generated by the height $1$ prime ideals of $R$ that contain $\Delta$, namely by the ideal $\frakp$.

Fix $k$ with $1\le k\le n-t+2$. Then $\frakp^{\la k\ra}$ has radical $\frakp$, and is unmixed by Lemma~\ref{lemma:cm}. Thus, the primary decomposition of $\frakp^{\la k\ra}$ has the form $\frakp^{(i)}$ for some $i$. The integer~$i$ can be computed after localization at $\frakp$, but then Lemma~\ref{lemma:valuation} implies that $\frakp^{\la k\ra}=\frakp^{(k)}$ as claimed. Note that the ideal $\frakp^{\la k\ra}$ is principal precisely when $k=n-t+2$. Lastly, each $\frakp^{\la k\ra}$ is a maximal Cohen-Macaulay module by Lemma~\ref{lemma:cm}.
\end{proof}

\begin{remark}
\label{remark:mcm}
Let $Y$ be a~$t\times n$ matrix of indeterminates over a field $\FF$, and consider the generic determinantal ring
\[
B\colonequals\FF[Y]/I_t(Y).
\]
Set $P$ to be the prime ideal of $B$ generated by the size $t-1$ minors of the first $t-1$ rows of~$Y$, and $Q$ to be the prime generated by the size $t-1$ minors of the first $t-1$ columns. By \cite[Example~9.27(d)]{Bruns-Vetter}, the following are maximal Cohen-Macaulay $B$-modules:
\[
B,\ P,\ Q,\ Q^2,\ \dots,\ Q^{n-t+1},
\]
and, in fact, the only rank one maximal Cohen-Macaulay $B$-modules up to isomorphism. The canonical module of $B$ is isomorphic to $Q^{n-t}$, see \cite[Theorem~8.8]{Bruns-Vetter}.

Since the Hankel determinantal ring $R$ may be obtained as the specialization of $B$ modulo a regular sequence, it follows that the images in $R$ of the modules displayed above are Cohen-Macaulay $R$-modules. Note that~$\frakp=PR$, and set~$\frakq\colonequals QR$, in which case
\[
R,\ \frakp,\ \frakq,\ \frakq^2,\ \dots,\ \frakq^{n-t+1}
\]
are Cohen-Macaulay $R$-modules. Due to the symmetry in a Hankel matrix, one has
\[
\frakq\ =\ \frakp^{\la n-t+1\ra}\ =\ \frakp^{(n-t+1)}.
\]
Fix $i$ with $1\le i\le n-t+1$. Since $\frakq^i$ is a maximal Cohen-Macaulay $R$-module, and hence a divisorial ideal, it follows that
\[
\frakq^i\ =\ (\frakp^{(n-t+1)})^i\ =\ \frakp^{(i(n-t+1))}\ \cong\ \frakp^{(n-t+2-i)}.
\]
In particular, $\frakq^{n-t+1}\cong\frakp$, and the $n-t+3$ rank one maximal Cohen-Macaulay $B$-modules specialize to the $n-t+2$ elements of the divisor class group of $R$.

The canonical module $Q^{n-t}$ of $B$ specializes to the canonical module
\[
\frakq^{n-t}\ \cong\ \frakp^{(2)}
\]
of $R$. Since the $a$-invariant of the ring $R$ is $1-t$, and $\frakp^{(2)}$ is generated in degree $t-1$, it follows that the \emph{graded} canonical module of $R$ is
\[
\omega_R\colonequals\frakp^{(2)}.
\]
Note that the number of generators of $\omega_R$ as an $R$-module is
\[
\binom{n-1}{t-1}.
\]
Since $\omega_R$ is a reflexive $R$-module of rank one, it corresponds to an element $[\omega_R]$ of $\Cl(R)$. The order of this element is
\[
\ord[\omega_R]\ =\ \begin{cases} n-t+2 & \text{ if $n-t+2$ is odd},\\ (n-t+2)/2 & \text{ if $n-t+2$ is even}. \end{cases}
\]
\end{remark}

%%%%%%%%%%%%%%%%%%%%%%%%%%%%%%%%%%%%%%%%%%%%%%%%%%%%%%%%%%%%%%%%%%%%%%%%%%%%%%%%%%
\section{\texorpdfstring{$F$}{F}-purity and the \texorpdfstring{$F$}{F}-pure threshold}
%%%%%%%%%%%%%%%%%%%%%%%%%%%%%%%%%%%%%%%%%%%%%%%%%%%%%%%%%%%%%%%%%%%%%%%%%%%%%%%%%%

Following \cite[page~121]{Hochster-Roberts:purity}, a ring $R$ of positive prime characteristic is \emph{$F$-pure} if the Frobenius endomorphism $F\colon R\to R$ is pure. We prove:

\begin{theorem}
\label{theorem:fpure}
Let $R$ be a Hankel determinantal ring over a field $\FF$. If $\FF$ has positive characteristic, then the ring $R$ is $F$-pure. If $\FF$ has characteristic zero, then $R$ has log canonical singularities.
\end{theorem}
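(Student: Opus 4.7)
The plan is to handle the two assertions separately. The characteristic zero statement will follow quickly from results established earlier in the paper; the positive characteristic statement requires new work, and I would proceed via Fedder's criterion.

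\emph{Characteristic zero.} By Theorem~\ref{theorem:ratsing}, $R$ has rational singularities. By Theorem~\ref{theorem:class:group}, the divisor class group $\Cl(R)$ is cyclic of order $n-t+2$, hence finite, and so $R$ is $\QQ$-Gorenstein. For a normal $\QQ$-Gorenstein variety, rational singularities are equivalent to canonical singularities by a theorem of Elkik; canonical singularities are in particular Kawamata log terminal, and hence log canonical. This settles the characteristic zero part.

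\emph{Positive characteristic.} I would apply Fedder's criterion to the presentation $R = A/I_t(H)$, where $A = \FF[x_1,\dots,x_{n+t-1}]$ and $\frakm := (x_1, \dots, x_{n+t-1})$. It suffices to produce an element $g \in A$ with
\[
g \cdot I_t(H) \subseteq I_t(H)^{[p]}
\quad\text{and}\quad
g \notin \frakm^{[p]}.
\]
For $t = 2$, the ring $R$ is the $n$-th Veronese subring of a polynomial ring, hence a pure subring, hence $F$-pure, and there is nothing to prove. For $t \geq 3$, a natural first candidate is $g := (M_1 \cdots M_N)^{p-1}$, where $M_1, \dots, M_N$ are the maximal minors of $H$; this automatically satisfies the colon condition since $g \cdot M_j$ contains the factor $M_j^p \in I_t(H)^{[p]}$. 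The substantial task is to verify $g \notin \frakm^{[p]}$. I would work with the lex term order $x_1 > \cdots > x_{n+t-1}$---under which, by a theorem of Conca, the minors $M_i$ form a Gr\"obner basis whose leading terms are pairwise distinct squarefree monomials---and attempt to exhibit a monomial in the expansion of $g$ whose variable exponents all remain strictly below $p$.

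\emph{Main obstacle.} The genuine difficulty is the non-containment $g \notin \frakm^{[p]}$. Because the Hankel structure forces variables to repeat across many different minors, the leading monomial of $(M_1 \cdots M_N)^{p-1}$ can itself land inside $\frakm^{[p]}$; small examples (e.g., $t=3$, $n=4$, $p=2$) already show that the naive choice of $g$ may fail. The content of the proof must therefore be a more refined construction---perhaps a well-chosen sub-product of minors, or a weighted exponent vector---together with the production of a specific monomial in its expansion whose every variable exponent lies strictly below $p$. This is where the combinatorics of Hankel matrices and the polynomial identities among their minors will be essential, and it is the technical heart of the $F$-purity assertion.
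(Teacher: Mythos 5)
Your positive-characteristic setup is the right one---the paper also argues via Fedder's criterion---but your proposal stops exactly at the step that constitutes the proof, and your characteristic-zero argument rests on a false implication. On the first point: the paper does not use the product of all maximal minors. It rewrites $I:=I_t(H)$ as the ideal of $t$-minors of a square $k\times k$ (if $n+t-1$ is odd) or $k\times(k+1)$ (if $n+t-1$ is even) Hankel matrix in the same variables, and takes $f:=\delta_1\delta_2$, the product of the two largest minors of that matrix; their lex initial terms are the complementary squarefree monomials $x_1x_3x_5\cdots$ and $x_2x_4\cdots$, so $\inlex(f)=x_1\cdots x_{n+t-1}$ and $f^{p-1}\notin\frakm^{[p]}$. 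The colon condition is then \emph{not} automatic, as it would be for your $g$: one first shows $f\in I^{(n-t+1)}$ by padding $\delta_1\delta_2$ with $n-t-1$ minors of size $t-1$ and invoking Lemma~\ref{lemma:symbolic} (a product of at most $d$ minors of total degree at least $td$ lies in $I^d$), and then, since $\Ass(A/I^{[p]})=\Ass(A/I)$ by flatness of Frobenius, one checks $f^{p-1}I\subseteq I^{[p]}$ after localizing at the prime $I$, where the pigeonhole principle gives $I^{(n-t+1)(p-1)+1}A_I\subseteq I^{[p]}A_I$. So the ``more refined construction'' you defer to is a specific pair of minors together with a symbolic-power computation; without it the $F$-purity assertion is not proved. (Your observation that the naive $g$ fails, e.g.\ for degree reasons when $p=2$, is correct.)

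On the characteristic-zero part: Elkik's theorem says that canonical (more generally, klt) singularities are rational; the converse is false, even for normal $\QQ$-Gorenstein varieties. Already among rational surface singularities---which are $\QQ$-Gorenstein because their divisor class groups are finite, by a theorem of Lipman---most are not log canonical. So ``rational $+$ $\QQ$-Gorenstein $\Rightarrow$ canonical'' cannot be invoked, and Theorem~\ref{theorem:ratsing} does not by itself yield log canonicity. The paper instead deduces the characteristic-zero statement from the positive-characteristic one: $R$ is of dense $F$-pure type and $\QQ$-Gorenstein, hence log canonical by the theorem of Hara and Watanabe. This is a second reason the positive-characteristic construction cannot be bypassed.
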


The proof uses the graded version of Fedder's criterion, \cite[Theorem~1.12]{Fedder}, and a result from \cite{Conca}; we record these below:

\begin{theorem}[Fedder's criterion]
\label{theorem:fedder}
Let $A$ be an $\NN$-graded polynomial ring, where $A_0$ is a field of characteristic $p>0$. Let $I$ be a homogeneous ideal of~$A$, and set $R \colonequals A/I$. Let $\frakm$ be the homogeneous maximal ideal of~$A$. Then $R$ is $F$-pure if and only if
\[
(I^{[p]}:_AI)\ \nsubseteq\ \frakm^{[p]}.
\]
\end{theorem}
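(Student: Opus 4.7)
The plan is to identify $\operatorname{Hom}_R(F_*R,R)$ explicitly with $F_*\bigl((I^{[p]}:_A I)/I^{[p]}\bigr)$ and then to characterize when some element of this module splits the Frobenius $R\to F_*R$.

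After reducing to the case in which $A_0$ is perfect (by faithfully flat base change, under which the colon condition is preserved), $F_*A$ is a free $A$-module with basis $\{F_*(x^\alpha):\alpha\in[0,p-1]^n\}$, where $x_1,\dots,x_n$ generate $\frakm$. Consequently every $a\in A$ admits a unique \emph{Frobenius expansion} $a=\sum_{\alpha\in[0,p-1]^n}x^\alpha\,a_\alpha^p$, and $\operatorname{Hom}_A(F_*A,A)$ is free of rank one as an $F_*A$-module, generated by the trace $T\colon F_*A\to A$ determined by $T(F_*a)=a_{(p-1,\dots,p-1)}$. Every $A$-linear map $F_*A\to A$ then takes the form $\varphi_u\colon F_*a\mapsto T(F_*(ua))$ for a unique $u\in A$. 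A direct computation in this basis will yield the coefficient-extraction identity
\[
a_\gamma\ =\ T(F_*(x^{(p-1,\dots,p-1)-\gamma}\,a))\qquad\text{for all }\gamma\in[0,p-1]^n.
\]

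The core step will be \emph{Fedder's lemma}: for any ideal $J\subseteq A$ and any $u\in A$, one has $u\in(J^{[p]}:_A J)$ if and only if $\varphi_u(F_*J)\subseteq J$. The forward direction is straightforward: for $a\in J$, write $ua=\sum c_i g_i^p$ with generators $g_i$ of $J$, and apply $A$-linearity of $T$ to get $T(F_*(ua))=\sum g_i T(F_*c_i)\in J$. The reverse direction is the main obstacle, and I would handle it via the extraction identity: assuming $T(F_*(uJ))\subseteq J$, for every generator $g$ of $J$ and every $\gamma\in[0,p-1]^n$, the element $x^{(p-1,\dots,p-1)-\gamma}g$ lies in $J$, so
\[
(ug)_\gamma\ =\ T(F_*(x^{(p-1,\dots,p-1)-\gamma}\,ug))\ \in\ T(F_*(uJ))\ \subseteq\ J;
\]
thus every Frobenius coefficient of $ug$ lies in $J$, hence $ug=\sum_\gamma x^\gamma(ug)_\gamma^p\in J^{[p]}$, proving $u\in(J^{[p]}:_A J)$. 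Combining this (applied to $J=I$) with the surjectivity of $\operatorname{Hom}_A(F_*A,A)\twoheadrightarrow\operatorname{Hom}_A(F_*A,A/I)$ (from projectivity of $F_*A$), and observing that an $A$-linear map $F_*A\to R$ descends to an $R$-linear map $F_*R\to R$ precisely when it vanishes on $F_*I$, I would obtain the identification
\[
\operatorname{Hom}_R(F_*R,R)\ \cong\ F_*\bigl((I^{[p]}:_A I)/I^{[p]}\bigr),
\]
under which the class of $u$ corresponds to the map $F_*a\mapsto T(F_*(ua))\bmod I$.

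To conclude: since the Frobenius on $R$ is finite, $F$-purity of $R$ is equivalent to the existence of an $R$-linear splitting $\psi\colon F_*R\to R$ with $\psi(F_*1)=1$. If $(I^{[p]}:_A I)\not\subseteq\frakm^{[p]}$, I would choose a homogeneous $u$ in the colon ideal with a nonzero coefficient at some monomial $x^\delta$, $\delta\in[0,p-1]^n$; then $v\colonequals x^{(p-1,\dots,p-1)-\delta}\,u$ remains in $(I^{[p]}:_A I)$ and is homogeneous of degree $n(p-1)$. The extraction identity yields $T(F_*v)=u_\delta$, which is a nonzero scalar in $A_0$ by a degree count and the choice of $\delta$. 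Rescaling $v$ by a suitable element of $A_0\setminus\{0\}$ then produces a splitting. Conversely, any splitting lifts through $\operatorname{Hom}_A(F_*A,A)$ to some $\varphi_u$ with $u\in(I^{[p]}:_A I)$ and $T(F_*u)\equiv 1\pmod I$, which forces the constant term of $u_{(p-1,\dots,p-1)}$ to be nonzero, hence $u\notin\frakm^{[p]}$.
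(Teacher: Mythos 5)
A preliminary remark: the paper does not prove this statement at all --- it records it with a citation to Fedder's original article \cite[Theorem~1.12]{Fedder} --- so your proposal can only be compared with Fedder's own argument. Your core argument in the perfect case is correct and is the standard modern ``splitting'' proof of the criterion: the Frobenius expansion basis of $F_*A$, the trace generator $T$ of $\operatorname{Hom}_A(F_*A,A)$, the coefficient-extraction identity, the key lemma $u\in(J^{[p]}:_AJ)\Leftrightarrow\varphi_u(F_*J)\subseteq J$, the identification $\operatorname{Hom}_R(F_*R,R)\cong F_*\bigl((I^{[p]}:_AI)/I^{[p]}\bigr)$, and the graded bookkeeping at the end (homogeneous $u$ exists because $(I^{[p]}:_AI)$ and $\frakm^{[p]}$ are homogeneous; a homogeneous Frobenius coefficient with nonzero constant term is a nonzero scalar) all check out, as does the equivalence of purity and splitting once $A_0$ is perfect, hence $F$-finite, so that $F_*R$ is a finitely presented $R$-module.

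The genuine gap is the opening reduction ``to the case in which $A_0$ is perfect by faithfully flat base change.'' You verify that the colon condition transfers, but you never address the transfer of $F$-purity itself, and that is exactly the delicate point. One direction is easy: if $R'\colonequals R\otimes_{A_0}A_0^{1/p^\infty}$ is $F$-pure, then factoring $R\to R'\to F_*R'$ as $R\to F_*R\to F_*R'$ and using that purity of a composite forces purity of the first factor shows $R$ is $F$-pure. But the direction your argument needs for ``$F$-pure $\Rightarrow$ colon condition'' --- that $F$-purity \emph{ascends} along the purely inseparable extension $A_0\to A_0^{1/p^\infty}$ --- is false in general: $K\colonequals\FF_p(t)[x]/(x^p-t)$ is a field, hence $F$-pure, yet $K\otimes_{\FF_p(t)}\FF_p(t)^{1/p}$ is non-reduced, hence not $F$-pure. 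The trouble is structural: base-changing the pure map $R\to F_*R$ gives a pure map $R'\to F_*R\otimes_RR'$, but the Frobenius of $R'$ is the composition of this with the multiplication map $F_*R\otimes_RR'\to F_*R'$, which has nilpotent kernel in the inseparable setting, and purity does not survive such a composition. In the graded-homogeneous setting the ascent \emph{is} true, but the only proof I know passes through Fedder's criterion itself (via the base-change-stable colon condition), so invoking it here is circular. This is precisely why Fedder's proof of the graded statement over an arbitrary field avoids splittings entirely: he tests purity of Frobenius against the graded injective hull $E=H^n_\frakm(A)$ (a graded Matlis duality argument), where the decisive computation --- $u\cdot F^e$ is injective on $E$ if and only if $u\notin\frakm^{[p^e]}$ --- is a monomial calculation insensitive to perfectness of $A_0$. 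To repair your proof you would either switch to that duality argument, or replace the perfect closure by the Hochster--Huneke $\Gamma$-construction, which provides a faithfully flat purely inseparable \emph{$F$-finite} extension $A_0\to A_0^\Gamma$ along which $F$-purity provably ascends and descends, and then rerun your trace argument with the free basis $\{F_*(b_jx^\alpha)\}$ of $F_*A$, $b_j$ a basis of $A_0^\Gamma$ over its subfield of $p$-th powers, since $F$-finite does not mean perfect.
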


The following lemma can be seen as a special case of \cite[Theorem~3.12]{Conca} that express the primary decomposition of a product of Hankel determinantal ideals in terms of symbolic powers and the so-called gamma functions. We present here a direct argument that is based only on \cite[Lemma~3.7]{Conca}. 

\begin{lemma}
\label{lemma:symbolic}
Let $A\colonequals\FF[x_1,\dots,x_{s+r-1}]$ be a polynomial ring over a field $\FF$, and let $H$ be the~$r\times s$ Hankel matrix in the indeterminates $x_1,\dots,x_{s+r-1}$. Set $I\colonequals I_t(H)$, where $t$ is an integer with $1\le t\le\min\{r,s\}$. Let $d$ be a positive integer, and let $\delta_1,\dots,\delta_m$ be minors of~$H$ such that $m\le d$ and $\sum_i\deg\delta_i\ge td$. Then
\[
\delta_1\cdots\delta_m\ \in\ I^d.
\]
\end{lemma}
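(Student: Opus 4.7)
The plan is to induct on $d$. The base $d=0$ is trivial (as $I^0=A$), and I handle the inductive step by a case split on whether $m<d$ or $m=d$. Since each entry of $H$ is a linear form, the size of $\delta_i$ equals its polynomial degree; write $t_i\colonequals\deg\delta_i$.

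When $m<d$, the degree hypothesis $\sum t_i\ge td>tm$ forces some $t_i>t$. Apply the generalized Laplace expansion to such a $\delta_i$ along any $t$ of its rows to get
\[
\delta_i\ =\ \sum_J \pm\,\alpha_J\beta_J,
\]
where each $\alpha_J$ is a $t$-minor of $H$ (hence a generator of $I$) and each $\beta_J$ is a minor of size $t_i-t$. Substituting gives
\[
\delta_1\cdots\delta_m\ =\ \sum_J \pm\,\alpha_J\cdot\bigl(\delta_1\cdots\hat{\delta_i}\cdots\delta_m\cdot\beta_J\bigr).
\]
The bracketed product has $m$ factors with sum of sizes $\sum t_j-t\ge t(d-1)$; since $m\le d-1$, the inductive hypothesis for $d-1$ places each such bracketed product in $I^{d-1}$, and multiplication by $\alpha_J\in I$ yields the required membership in $I^d$.

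When $m=d$, the average size is at least $t$. The plan here is to apply \cite[Lemma~3.7]{Conca} iteratively to rewrite $\delta_1\cdots\delta_m$ as an $A$-linear combination of products $\delta'_1\cdots\delta'_m$ whose sizes differ pairwise by at most $1$; this rewriting preserves both the number of factors and the total sum of sizes, and terminates because the non-negative integer $\sum_i t_i^2$ strictly decreases at each size-trading step. Once the sizes $t'_i$ are balanced in this sense, a direct count shows they must all be $\ge t$: otherwise the $t'_i$ lie in $\{t-1,t\}$ with at least one equal to $t-1$, whence $\sum t'_i<tm=td$, contradicting the preserved degree hypothesis. Each balanced summand is then a product of $m=d$ elements of $I$, and hence lies in $I^d$.

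The principal obstacle is to verify that \cite[Lemma~3.7]{Conca} supplies the Hankel-specific size-trading identity invoked in the second case, namely that whenever two factors have sizes differing by at least $2$, their product expands as an $A$-linear combination of products of minors with more balanced sizes. Such an identity fails for generic matrices; it is special to the Hankel case, reflecting the symmetry $H_{ij}=x_{i+j-1}$.
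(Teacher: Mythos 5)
Your proof is correct and, in its main case $m=d$, is essentially the paper's argument: the paper likewise iterates \cite[Lemma~3.7]{Conca}, which does indeed supply the size-trading containment $I_uI_v\subseteq I_{u+1}I_{v-1}$ exactly when $u+1<v$ (i.e., when the two sizes differ by at least $2$), and it terminates the rewriting as soon as every factor has size at least $t$ rather than insisting on your stronger ``pairwise differ by at most $1$'' condition. The only other difference is cosmetic: the paper disposes of the case $m<d$ by padding the product with degree-zero (empty) minors so that $m=d$, rather than by your Laplace-expansion induction on $d$.
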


\begin{proof} By adding factors of degree $0$ if needed, we may assume that $m=d$. For $u$ an integer, set~$I_u\colonequals I_u(H)$. If $\deg\delta_i\ge t$ for all $i=1,\dots,d$, then the assertion is obvious. If~$\deg \delta_i< t$ for some $i$, say $u=\deg \delta_1< t$, then, since $\sum_i\deg\delta_i\ge td$, there must be an index~$j$ such that $\deg\delta_j>t$, say $v=\deg\delta_2>t$. By \cite[Lemma 3.7]{Conca} one has
\[
I_uI_v\subseteq I_{u+1}I_{v-1}
\]
since~$u+1<v$. Hence we may replace $\delta_1\delta_2$ in the product with $\delta'_1\delta'_2$, where $\deg\delta'_1=u+1$ and $\deg \delta'_2=v-1$. Repeating the argument as needed, we obtain the desired assertion.
\end{proof} 

\begin{proof}[Proof of Theorem~\ref{theorem:fpure}]
The characteristic zero case follows from the positive characteristic assertion by \cite[Theorem~3.9]{Hara-Watanabe}; in view of this, let $\FF$ be a field of characteristic $p>0$. Set $A\colonequals\FF[x_1,\dots,x_{n+t-1}]$ and $I\colonequals I_t(H)$. By Fedder's criterion, it suffices to verify that
\[
(I^{[p]}:_AI)\ \nsubseteq\ \frakm^{[p]},
\]
where $\frakm$ is the homogeneous maximal ideal of $A$. We construct a polynomial $f$ with
\begin{equation}
\label{equation:symbolic}
f\ \in\ I^{(n-t+1)}
\end{equation}
such that, with respect to the lexicographic order $x_1>x_2> \cdots >x_{n+t-1}$, one has
\[
\inlex(f)\ =\ x_1x_2\cdots x_{n+t-1}.
\]
Since the initial term of $f$ is squarefree, it follows that $f^{p-1}\notin \frakm^{[p]}$. We claim that~\eqref{equation:symbolic} implies $f^{p-1} \in (I^{[p]}:_AI)$, i.e.,
\[
f^{p-1}I\ \subseteq\ I^{[p]}.
\]
By the flatness of the Frobenius endomorphism of $A$, the set of associated primes of $A/I^{[p]}$ equals that of~$A/I$, so it suffices to verify that the containment displayed above holds after localization at the prime ideal $I$. The ideal $I$ has height $n-t+1$, so $(A_I,IA_I)$ is a regular local ring of dimension $n-t+1$, and the pigeonhole principle gives
\[
I^{(n-t+1)(p-1)+1}A_I\ \subseteq\ I^{[p]}A_I.
\]
Using~\eqref{equation:symbolic}, it follows that
\[
f^{p-1}IA_I\ \subseteq\ I^{(n-t+1)(p-1)+1}A_I,
\]
which proves the claim. It remains to construct $f$ with the properties asserted above; the construction depends on whether $n+t-1$ is odd or even:

Suppose $n+t-1$ is odd, set $k\colonequals (n+t)/2$. Then $I$ also equals the ideal generated by the size $t$ minors of the~$k \times k$ Hankel matrix
\[
H'\colonequals\begin{pmatrix}
x_1 & x_2 & x_3& \cdots & x_k\\
x_2 & x_3 & x_4 & \cdots & x_{k+1}\\
\vdots & \vdots & \vdots & \vdots & \vdots\\
x_k & x_{k+1} &\cdots & \cdots & x_{n+t-1}
\end{pmatrix}.
\]
Let $f$ be the product of $\delta_1\colonequals{[1\ \dots\ k \mid 1\ \dots\ k]}_{H'}$ and $\delta_2\colonequals{[1\ \dots\ k-1 \mid 2\ \dots\ k]}_{H'}$. Then
\[
\inlex(f)\ =\ (x_1x_3 \cdots x_{n+t-1})(x_2x_4\cdots x_{n+t-2})\ =\ x_1x_2\cdots x_{n+t-1}
\]
as claimed. Let $\delta_3,\dots,\delta_{n-t+1}$ be size $t-1$ minors of $H'$. Then Lemma~\ref{lemma:symbolic} implies that
\[
\delta_1\cdots\delta_{n-t+1}\ \in\ I^{n-t+1}.
\]
Since $I$ is a prime ideal generated in degree $t$, and each of $\delta_3,\dots,\delta_{n-t+1}$ has degree $t-1$, it follows that $f=\delta_1\delta_2$ belongs to the symbolic power $I^{(n-t+1)}$, as claimed in~\eqref{equation:symbolic}.

When $n+t-1$ is even, set $k\colonequals (n+t-1)/2$, and consider the~$k \times (k+1)$ Hankel matrix
\[
H''\colonequals\begin{pmatrix}
x_1 & x_2 & \cdots & x_k & x_{k+1}\\
x_2 & x_3 & \cdots & x_{k+1} & x_{k+2}\\
\vdots & \vdots & \vdots & \vdots & \vdots\\
x_k & \cdots & \cdots & x_{n+t-2} & x_{n+t-1}
\end{pmatrix}.
\]
Then $I$ equals $I_t(H'')$. Take $f$ to be the product of the minors $\delta_1\colonequals{[1\ \dots\ k \mid 1\ \dots\ k]}_{H''}$ and~$\delta_2\colonequals{[1\ \dots\ k \mid 2\ \dots\ k+1]}_{H''}$, in which case
\[
\inlex(f)\ =\ (x_1x_3 \cdots x_{n+t-2})(x_2x_4\cdots x_{n+t-1})\ =\ x_1x_2\cdots x_{n+t-1}.
\]
Choosing size $t-1$ minors $\delta_3,\dots,\delta_{n-t+1}$ of $H''$, Lemma~\ref{lemma:symbolic} gives
\[
\delta_1\cdots\delta_{n-t+1}\ \in\ I^{n-t+1}
\]
and hence $f\in I^{(n-t+1)}$, as in the previous case.
\end{proof}

The definition of $F$-pure thresholds is due to Takagi and Watanabe \cite{Takagi-Watanabe}, and provides a positive characteristic analogue of the log canonical threshold. We focus here on the $F$-pure threshold of a homogeneous ideal in standard graded $F$-pure ring:

\begin{definition}
Let $A$ be a polynomial ring over an $F$-finite field of characteristic~$p>0$, and let $I$ be a homogeneous ideal such that $R \colonequals A/I$ is $F$-pure. Let $\fraka$ be a homogeneous ideal of~$R$, and let $J$ be its preimage in $A$. Given $e \in \NN$, set
\[
\nu_e(\fraka)\colonequals\max\left\{r \ge 0 \mid (I^{[q]}:_AI)J^r \not\subseteq \frakm_A^{[q]} \right\},
\]
where $q=p^e$. Then the \emph{$F$-pure threshold} of $\fraka\subset R$ is
\[
\fpt(\fraka) \colonequals \lim_{e \to \infty}\nu_e(\fraka)/p^e.
\] 
\end{definition}

Suppose, in addition, that $R$ is normal; let $\omega_R$ be the graded canonical module of $R$. Taking $\fraka$ to be $\frakm_R$ in the above definition, \cite[Theorem~4.1]{STV} implies that $-\nu_e(\frakm_R)$ equals the degree of a minimal generator of $\omega_R^{(1-q)}$. Using this, we obtain:

\begin{theorem}
\label{theorem:fpt1}
Let $R=\FF[x_1,\dots,x_{n+t-1}]/I_t(H)$, where $\FF$ is a field of characteristic $p>0$, and $H$ is a $t\times n$ Hankel matrix. Then the $F$-pure threshold of $\frakm_R\subset R$ is
\[
\fpt(\frakm_R)\ =\ \frac{2(t-1)}{n-t+2}.
\]
\end{theorem}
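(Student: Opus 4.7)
The plan is to apply the Singh--Takagi--Varbaro formula recalled just before the theorem, namely $\nu_e(\frakm_R)=-\alpha(1-q)$ with $q=p^e$, where $\alpha(m)$ denotes the minimum degree of a minimal homogeneous generator of the graded fractional ideal $\omega_R^{(m)}$. The problem thus reduces to pinning down $\alpha(1-q)$ asymptotically as $q\to\infty$; the answer will come out to $-(2(t-1)/(n-t+2))q+O(1)$.

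Set $a\colonequals n-t+2$. By Theorem~\ref{theorem:class:group} and Remark~\ref{remark:mcm}, the class group $\Cl(R)$ is cyclic of order $a$ generated by $[\frakp]$, with the principal relation realized by $\frakp^{(a)}=\Delta R$ for $\Delta\colonequals[1\ \dots\ t-1\mid 1\ \dots\ t-1]$ of degree $t-1$; moreover, $\omega_R=\frakp^{(2)}$. The crux of the argument will be the graded identification
\[
\omega_R^{(a)}\ \cong\ R\bigl(-2(t-1)\bigr).
\]
Since $\omega_R^{(a)}$ is the reflexive hull of $(\frakp^{(2)})^a$, which equals the symbolic power $\frakp^{(2a)}$, this amounts to showing $\frakp^{(2a)}=\Delta^2 R$ as graded fractional ideals. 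I would deduce this by iterating the identity $\frakp^{(r+a)}=\Delta\cdot\frakp^{(r)}$, itself a direct consequence of Lemma~\ref{lemma:valuation}: $\Delta$ is a local parameter of the DVR $R_\frakp$ and is a unit at every other height-one prime, so multiplying a reflexive $\frakp$-primary ideal by $\Delta$ raises its $\frakp$-adic valuation by exactly $a$ while preserving reflexivity.

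Tensoring with $\omega_R^{(m)}$ (and noting that tensoring with a shifted free module preserves reflexivity) then yields the periodicity
\[
\omega_R^{(m+a)}\ \cong\ \omega_R^{(m)}\bigl(-2(t-1)\bigr)
\quad\text{for every }\ m\in\ZZ,
\]
equivalently the recursion $\alpha(m+a)=\alpha(m)+2(t-1)$. Since the finite set $\{\alpha(r):0\le r<a\}$ is bounded, this recursion forces $\alpha(m)=2(t-1)\lfloor m/a\rfloor+O(1)$, so $\alpha(m)/m\to 2(t-1)/a$ as $m\to-\infty$. Specializing to $m=1-p^e$ will give
\[
\fpt(\frakm_R)\ =\ \lim_{e\to\infty}\frac{-\alpha(1-p^e)}{p^e}\ =\ \frac{2(t-1)}{n-t+2},
\]
as claimed. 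The main (and essentially only) obstacle is the graded identification $\omega_R^{(a)}\cong R(-2(t-1))$, for which one must track degree shifts carefully through the reflexive hulls; the passage from this identification to the limit is routine bookkeeping with the class group.
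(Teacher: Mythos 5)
Your proposal is correct and follows essentially the same route as the paper: both rest on the Singh--Takagi--Varbaro formula for $\nu_e(\frakm_R)$, the identification $\omega_R=\frakp^{(2)}$, and the periodicity coming from $\frakp^{(n-t+2)}=\Delta R$ with $\deg\Delta=t-1$; the paper merely pushes one step further and records the exact value $\nu_e(\frakm_R)=(t-1)\lfloor 2(q-1)/(n-t+2)\rfloor$ by noting that $\frakp^{(-j)}$ is generated in degree $0$ for $0\le j\le n-t+1$, whereas your boundedness argument yields only the (sufficient) asymptotics. One small slip: $\Delta$ is not a local parameter of $R_\frakp$ --- by Lemma~\ref{lemma:valuation} its valuation is $n-t+2$, not $1$ --- but the fact you actually use (multiplication by $\Delta$ raises the valuation by exactly $n-t+2$ and is invertible at all other height-one primes) is the correct one, so nothing breaks.
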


\begin{proof}
Recall from Remark~\ref{remark:mcm} that $\omega_R=\frakp^{(2)}$. For an integer $q=p^e$, one then has
\[
\omega_R^{(1-q)}\ =\ \frakp^{(2(1-q))}.
\]
Write
\[
2(q-1)=i(n-t+2)+j,
\quad\text{ where }\ 0\le j\le n-t+1.
\]
In view of the graded isomorphism
\[
\frakp^{(n-t+2)}\ \cong\ R(-(t-1)),
\]
one then has
\[
\omega_R^{(1-q)}\ =\ \frakp^{(2(1-q))}\ =\ \frakp^{(-i(n-t+2))}\frakp^{(-j)}\ \cong\ \frakp^{(-j)}(i(t-1)).
\]
Since $0\le j\le n-t+1$, the module $\frakp^{(-j)}$ has minimal generators in degree $0$, which then implies that $\omega_R^{(1-q)}$ has minimal generators in degree $-i(t-1)$, and hence that
\[
\nu_e(\frakm_R)\ =\ i(t-1)\ =\ (t-1) \left\lfloor\frac{2(q-1)}{n-t+2}\right\rfloor.
\]
The calculation of $\fpt(\frakm_R)$ follows immediately from this.
\end{proof}

Using the general theory developed in \cite{Henriques-Varbaro}, one can also compute the $F$-pure threshold of the ideal $I_t(H)$ in the polynomial ring $\FF[x_1,\dots,x_{n+t-1}]$:

\begin{theorem}
\label{theorem:fpt2}
Let $H$ be a $t\times n$ Hankel matrix of indeterminates over a field $\FF$ of positive characteristic. Then the $F$-pure threshold of $I_t(H)\subset \FF[x_1,\dots,x_{n+t-1}]$ is
\[
\fpt(I_t(H))\ =\ \min\left\{\frac{n+t-2i+1}{t-i+1}\mid i=1,\dots,t\right\}.
\]
More precisely, if $\lambda\in \mathbb{R}_{>0}$, the generalized test ideal $\tau(\lambda \bullet I_t(H))$ is
\[
\tau(\lambda \bullet I_t(H))\ =\ \bigcap_{i=1}^tI_i(H)^{(\lfloor\lambda(t-i+1)\rfloor-n-t+2i)}.
\]
\end{theorem}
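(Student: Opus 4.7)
The plan is to first establish the generalized test ideal formula, then derive the $F$-pure threshold as a corollary. The test ideal formula is the substantive statement, and the $F$-pure threshold computation is essentially an extraction of the critical exponent.

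To prove the test ideal formula, I would apply the framework of \cite{Henriques-Varbaro}, which provides explicit descriptions of generalized test ideals of ideals whose ordinary powers admit a primary decomposition in terms of symbolic powers of a chain of related ideals. In our setting, the relevant chain is $I_1(H) \supsetneq I_2(H) \supsetneq \cdots \supsetneq I_t(H)$, and the required combinatorial input is a primary-decomposition statement for products of Hankel determinantal ideals in terms of symbolic powers of these lower-minor ideals; this is the content of \cite[Theorem~3.12]{Conca}, with Lemma~\ref{lemma:symbolic} providing the key inclusion. Combining Conca's primary decomposition with the $F$-purity established in Theorem~\ref{theorem:fpure} and the asymptotic machinery of Henriques-Varbaro would yield
\[
\tau(\lambda \bullet I_t(H))\ =\ \bigcap_{i=1}^t I_i(H)^{(\lfloor\lambda(t-i+1)\rfloor-n-t+2i)},
\]
where the coefficient $t-i+1$ reflects the multiplicity with which $I_i(H)$ contributes to a generic product factoring through $I_t(H)$, and the shift $-(n+t-2i)$ accounts for the height contribution of $I_i(H)$ (which equals $n-i+1$) in the asymptotic estimate.

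With the test ideal formula in hand, the $F$-pure threshold follows from the characterization $\fpt(I) = \sup\{\lambda > 0 : \tau(\lambda\bullet I) = A\}$, where $A=\FF[x_1,\dots,x_{n+t-1}]$. Since $I_i(H)^{(k)}$ equals the unit ideal precisely when $k \le 0$, the intersection above equals $A$ if and only if $\lfloor\lambda(t-i+1)\rfloor \le n+t-2i$ for every $i \in \{1,\ldots,t\}$, equivalently $\lambda(t-i+1) < n+t-2i+1$, i.e., $\lambda < (n+t-2i+1)/(t-i+1)$. Taking the supremum over all such $\lambda$ yields
\[
\fpt(I_t(H))\ =\ \min\left\{\frac{n+t-2i+1}{t-i+1}\ \Big|\ i = 1,\ldots, t\right\},
\]
as required; a brief check confirms that at $\lambda=\fpt(I_t(H))$ one has $\lfloor\lambda(t-i+1)\rfloor=n+t-2i+1$ for the minimizing index, so the test ideal is indeed proper, confirming tightness.

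The main obstacle I foresee is matching the Hankel-specific combinatorics to the general framework: Conca's primary decomposition provides the raw data, but extracting exactly the asymptotic exponents $\lfloor\lambda(t-i+1)\rfloor-n-t+2i$ requires careful tracking of how the gamma functions of \cite{Conca} govern the growth of symbolic powers as the multiplicity grows, and verifying that the abstract hypotheses of \cite{Henriques-Varbaro} apply verbatim in the Hankel setting rather than needing adaptation from, say, the generic determinantal case. A secondary technical point is handling the floor function cleanly in the passage from finite-level Fedder-type computations to the limiting test ideal, which is where the strict-versus-nonstrict inequality translation must be made precise.
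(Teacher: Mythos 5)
Your overall route is the paper's: establish the test ideal formula via the machinery of \cite{Henriques-Varbaro}, feeding in the primary decomposition data from \cite[Theorem~3.12]{Conca}, and then read off $\fpt(I_t(H))$ as the supremum of those $\lambda$ for which every exponent $\lfloor\lambda(t-i+1)\rfloor-n-t+2i$ is nonpositive; that last extraction, including the floor-versus-strict-inequality bookkeeping, is carried out correctly. However, two points need repair. First, your parenthetical that the height of $I_i(H)$ equals $n-i+1$ is wrong except when $i=t$: since $I_i(H)$ is generated by the size $i$ minors of an $i\times(n+t-i)$ Hankel matrix, one has $\height I_i(H)=n+t-2i+1$. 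This is not cosmetic: the shift $-n-t+2i$ in the exponent is exactly $-\height I_i(H)+1$, so with your value of the height your own heuristic would predict a different, incorrect, formula.

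Second, the step you defer as ``the main obstacle'' --- verifying that the hypotheses of \cite{Henriques-Varbaro} hold for $I_t(H)$ --- is precisely the substantive content of the paper's proof, and it is not supplied by the $F$-purity of $R$ alone. What is needed is: (i) the powers of $I_t(H)$ are integrally closed, by \cite[Theorem~4.5]{Conca}; (ii) $\bigcup_{s\ge1}\Ass I_t(H)^s\subseteq\{I_1(H),\dots,I_t(H)\}$, condition $(\diamond)$, and the values $e_{I_i(H)}(I_t(H))=t-i+1$, all extracted from \cite[Theorem~3.12]{Conca}; and (iii) condition $(\diamond+)$, which the paper verifies by reusing the explicit polynomial $f$ constructed in the proof of Theorem~\ref{theorem:fpure} --- a product of two maximal minors of a reshaped Hankel matrix, with squarefree initial term $x_1x_2\cdots x_{n+t-1}$ --- and checking via Lemma~\ref{lemma:symbolic} that $f\in I_i(H)^{(n+t-2i+1)}=I_i(H)^{(\height I_i(H))}$ for every $i=1,\dots,t$, not only for $i=t$. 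Your appeal to ``the $F$-purity established in Theorem~\ref{theorem:fpure}'' should be replaced by this sharper use of the element $f$ itself; as written, the proposal identifies the right framework but leaves its applicability unverified.
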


\begin{proof}
The powers of the ideal $I_t(H)$ are integrally closed by \cite[Theorem~4.5]{Conca}, and using~\cite[Theorem~3.12]{Conca} one has
\[
\bigcup_{s\ge 1}\Ass I_t(H)^s\ \subseteq\ \{I_1(H),\ I_2(H),\ \dots,\ I_t(H)\}.
\]
In the notation of \cite[\S\,3]{Henriques-Varbaro}, by \cite[Theorem~3.12]{Conca} we also infer that $I_t(H)$ satisfies condition ($\diamond$) and that
\[
e_{I_i(H)}(I_t(H))\ =\ t-i+1
\quad\text{ for }\ i=1,\dots,t.
\]

Recall that the polynomial $f$ constructed in the proof of Theorem~\ref{theorem:fpure} has a squarefree initial term. By an argument similar to the one used there for $i=t$, one sees that
\[
f\ \in \ I_i(H)^{(n+t-2i+1)}
\quad\text{ for }\ i=1,\dots,t.
\]
Since $I_i(H)$ is equal to the ideal generated by the size $i$ minors of an $i\times (t+n-i)$ Hankel matrix, it follows that 
$\height I_i(H)=n+t-2i+1$, and that the ideal $I_t(H)$ satisfies the condition ($\diamond+$). The assertion now follows by \cite[Theorem~3.14]{Henriques-Varbaro}.
\end{proof}

\begin{remark}
A similar argument allows one to compute the $F$-pure threshold and the generalized test ideals (in positive characteristic), as well as the log canonical threshold and the multiplier ideals (in characteristic zero), of any product of ideals of minors of a Hankel matrix in a polynomial ring.
\end{remark}

We conclude with the following question; we prove in \cite{CMSV} that the answer is affirmative in a number of cases.

\begin{question}
Is every Hankel determinantal ring over a field of positive characteristic an~$F$-regular ring?
\end{question}

%%%%%%%%%%%%%%%%%%%%%%%%%%%%%%%%%%%%%%%%%%%%%%%%%%%%%%%%%%%%%%%%%%%%%%%%

\end{document}